\newtheorem{theorem}{Theorem}
\newtheorem{lemma}{Lemma}
\newtheorem{remark}{Remark}
\newtheorem{assumption}{Assumption}
\newcommand{\F}{ \mathcal{F} }
\renewcommand{\Pr}{ \mathbb{P} }
\newcommand{\R}{\mathbb{R}}
\newcommand{\E}{\mathbb{E}}
\newcommand{\df}{\mathrm{d}}
\newcommand{\dX}{\dot{X}}
\newcommand{\ddX}{\ddot{X}}
\newcommand{\ene}{\mathcal{E}}
\newcommand{\lang}{\left\langle}
\newcommand{\rang}{\right\rangle}
\newcommand{\lnorm}{\left\|}
\newcommand{\rnorm}{\right\|}
\newcommand{\wX}{\widetilde{X}}
\newcommand{\wY}{\widetilde{Y}}
\newcommand{\wV}{\widetilde{V}}
\newcommand{\lb}{\left(}
\newcommand{\rb}{\right)}
\newcommand{\lbm}{\left[}
\newcommand{\rbm}{\right]}
\newcommand{\tr}{\mathsf{T}}
\newcommand{\hochkomma}{${\,\,}^{,}$}
\title{The \emph{Anytime} Convergence of Stochastic Gradient Descent with Momentum: From a Continuous-Time Perspective}
\author{
Yasong Feng\footnote{Equal first authors.}\hochkomma\thanks{Shanghai Center for Mathematical Sciences, Fudan University; email: \texttt{ysfeng20@fudan.edu.cn}.}\and
Yifan Jiang${}^{*,}$\thanks{Mathematical Institute, University of Oxford; email: \texttt{yifan.jiang@maths.ox.ac.uk}.}\and
Tianyu Wang\footnote{Correponding author.}\hochkomma\thanks{Shanghai Center for Mathematical Sciences, Fudan University; email: \texttt{wangtianyu@fudan.edu.cn}.}\and
Zhiliang Ying\thanks{Department of Statistics, Columbia University; email: \texttt{zying@stat.columbia.edu}.}}
\date{}
\begin{document}

\maketitle

\begin{abstract}
    We study the stochastic optimization problem from a continuous-time perspective, with a focus on the Stochastic Gradient Descent with Momentum (SGDM) method. We show that the trajectory of SGDM, despite its \emph{stochastic} nature, converges in $L_2$-norm to a \emph{deterministic} second-order Ordinary Differential Equation (ODE) as the stepsize goes to zero. The connection between the ODE and the algorithm results in a useful development for the discrete-time convergence analysis. More specifically, we develop, through the construction of a suitable Lyapunov function, convergence results for the ODE, which are then translated to the corresponding convergence results for the discrete-time case. This approach yields a novel \emph{anytime} convergence guarantee for stochastic gradient methods. In particular, we prove that the sequence $\{ x_k \}$, governed by running SGDM on a smooth convex function $f$, satisfies
    \begin{align*}
        \Pr\left(f (x_k) - f^* \le C\left(1+\log\frac{1}{\beta}\right)\frac{\log k}{\sqrt{k}},\;\text{for all $k$}\right)\ge 1-\beta\text{\quad for any $\beta>0$,}
    \end{align*}
    where $f^*=\min_{x\in\R^n} f(x)$, and $C$ is a constant. Rather than at a single step, this result captures the convergence behavior across the entire trajectory of the algorithm.
\end{abstract} 

% ------------------------

% The above result strengthens existing guarantees in the literature, which are of the form: $ \Pr(f(x_k)-f^*\leq U(k,\beta))\geq 1-\beta $ for any fixed $k$, where $ U(k,\beta) $ denotes an upper bound. This innovation .... 

% ------------------------

% \setlength{\parindent}{0pt}

% \input{./tex/intro} 
\section{Introduction}

Stochastic gradient methods are widely used in various fields such as machine learning \citep[e.g.,][]{johnson2013accelerating, loizou2017linearly}, statistics \citep[e.g.,][]{robbins1951stochastic}, and more. These algorithms, including Stochastic Gradient Descent (SGD) and its variants, have been successfully implemented in many applications. Many research works have focused on establishing convergence guarantees for SGD \citep{nemirovski2009robust, shamir2013stochastic,harvey2019tight} and its variants \citep{lan2012optimal, ghadimi2013stochastic, gitman2019understanding, li2019convergence, gorbunov2020stochastic, liu2020improved}, aiming to ensure their efficiency and reliability. 

Also, the relationship between the continuous-time dynamics and the discrete-time recursive algorithms has been studied in mathematics and related fields \citep{ljung1977analysis, helmke1996optimization}. Recently, there is a resurgence of interest in the understanding of continuous-time interpretation of discrete-time algorithms, inspired by the ODE modeling of Nesterov's accelerated gradient method \citep{su2014differential}. This new perspective has led to valuable insights in the analysis \citep{wibisono2016variational, shi2021understanding} and design \citep{krichene2015accelerated, zhang2018direct,  shi2019acceleration, zhang2021revisiting} of the deterministic optimization algorithms.

The relationship between continuous-time dynamics and discrete-time algorithms has also been explored in the context of stochastic optimization \citep{mandt2016variational, li2017stochastic, he2018differential, orvieto2019continuous, cheng2020stochastic, shi2021hyperparameters, difonzo2022stochastic,shi2023learning}. In particular, previous work by \citet{cheng2020stochastic} investigated numeric SDEs in machine learning applications, motivated by stochastic gradient algorithms. 
However, the continuous-time limits of some stochastic optimization algorithms, including the SGDM in this paper, exhibit different behavior. Specifically, the stochasticity may disappear as the stepsize goes to zero. 
Another line of research has centered around Langevin dynamics \citep[e.g.,][]{hsieh2018mirrored,xu2018global}. With roots in the ergodic analysis of simulated annealing and related algorithms \citep{chiang1987diffusion,gelfand1991recursive,belisle1992convergence,granville1994simulated, mattingly2010convergence}, recent studies in the machine learning community have primarily focused on specific tasks, such as statistical sampling for machine learning \citep{welling2011bayesian,zou2019sampling,chatterji2020langevin,mou2021high} and Riemannian learning \citep{cheng2022efficient}.
% However, the continuous-time limit of some stochastic optimization algorithms, including the SGDM in this paper, exhibit different behavior. Specifically, the stochasticity may disappear as the stepsize goes to zero.  

% \textcolor{red}{checked till here}
% In contrast, previous studies on Langevin dynamics have focused on analyzing the Euler scheme for SDEs. More related works are discussed in Appendix \ref{sec:additional}. 

In this paper, we consider the optimization problem, $\min_{x\in\R^n}f(x)$, in an environment where $f$ is accessible only through a stochastic gradient oracle. That is, if we query the oracle at $x$, it returns a stochastic version $g(x,\xi)$ of the gradient $\nabla f(x)$. We use $x^*$ to denote an optimal solution of this problem, and $f^*=f(x^*)=\min_{x\in\R^n}f(x)$ is the minimal function value.
Our main focus is on studying the Stochastic Gradient Descent with Momentum (SGDM) method given by
% Eq. \eqref{eq:SGDM-intro}:
\begin{align}\label{eq:SGDM-intro}
    \begin{split}
        &x_k-x_{k-1}=\eta v_{k-1},\\
        &v_k-v_{k-1}=-\frac{2}{k}v_k-\frac{2}{k}\cdot\frac{1}{\sqrt{k\eta}}g(x_k,\xi_k), 
    \end{split}
\end{align}
where $\eta$ is the stepsize. From a continuous-time perspective, we rigorously prove that as $\eta$ tends to zero, the sequence $\{x_k\}$ generated by SGDM converges in $L_2$-norm to the trajectory of the second-order ODE
\begin{align}\label{eq:SGDM-ODE-intro}
    \ddot{X}(t)+\frac{2}{t}\dot{X}(t)+\frac{2}{t^{3/2}}\nabla f(X(t))=0.
\end{align} 
The continuous-time limits of some other stochastic optimization methods are also ODEs of the form (\ref{eq:SGDM-ODE-intro}), including the well-known AC-SA algorithm \citep{lan2020first}; see Appendix \ref{app:acsa} for a detailed derivation.

This continuous-time version provides insights into the discrete-time analysis. In this connection, we establish the convergence guarantees of the ODE \eqref{eq:SGDM-ODE-intro}, which are then translated into the convergence results of SGDM. Specifically, this approach gives a new characterization of the dynamical property of SGDM. Leveraging this dynamical insight, we develop a novel supermartingale argument and contribute to the existing theoretical analysis on stochastic gradient methods, particularly in terms of the anytime high probability convergence guarantee under mild conditions.

\subsection{The Anytime High Probability Convergence and Our Contributions}
For theoretical analysis of stochastic optimization algorithms, an important topic is to develop convergence guarantees that hold with high probability \citep{nemirovski2009robust, lan2012optimal, nazin2019algorithms, gorbunov2020stochastic, liu2023high}. This is partly due to the fact that high probability analysis better reflects the performance of a single run of the algorithm. For a given probability $1-\beta$, the existing results typically take the form:
\begin{align}\label{result-type-fixk}
    \text{\emph{For each $k\in\mathbb{N}$}}, \; \Pr\left(f(x_k)-f^*\leq U(k,\beta)\right)\geq1-\beta,
\end{align}
where $U(k,\beta)$ represents an upper bound depending on $k$ and $\beta$. However, the guarantee (\ref{result-type-fixk}) is insufficient if a practitioner wants to capture the behavior of the whole trajectory of the algorithm, or requires the upper bound to remain valid with high probability at any data-dependent stopping time \citep{waudby2020confidence, howard2021time}. 
For instance, multi-fidelity hyperparameter optimization (HPO) algorithms \citep{jamieson2016non, li2018hyperband} use losses at various training iterates, rather than only at the last iterate, in the hyperpameter selection, and thus accelerate the HPO process. Therefore, the convergence bound of the whole training loss trajectory is important for the effectiveness of the HPO algorithm.
In such cases, the following condition for the sequence $\{x_k\}$ must be met:
\begin{align}\label{result-type-anytime}
    \Pr\left(f(x_k)-f^*\leq U(k,\beta),\;\text{for all } k\right)\geq1-\beta.
\end{align}
We shall refer to results of the form (\ref{result-type-anytime}) as \emph{anytime high probability guarantees}.
% The \emph{anytime} guarantee is of essential importance when the behavior of the entire trajectory, rather than a single iterate, is used in decision making 
% and they are of essential importance when the behavior of the entire trajectory, rather than a single iterate, is used in decision-making. as is the case with hyperparameter optimization \citep{jamieson2016non,li2018hyperband}.
% The \emph{anytime} convergence guarantee becomes critically important in scenarios such as hyperparameter optimization \citep{jamieson2016non,li2018hyperband}, where decisions are made based on the performance of the entire trajectory, not just a single iterate.
Furthermore, this concept is closely related to the \emph{confidence sequence}, which has been widely investigated in sequential statistics \citep{lai1976confidence, sinha1984invariant} and applied to online learning \citep{jamieson2014lil, kaufmann2021mixture, orabona2023tight}. However, to the best of our knowledge, anytime convergence guarantees have not been studied in the stochastic optimization literature.

% \textcolor{red}{checked till here}

% One may derive (\ref{result-type-anytime}) from (\ref{result-type-fixk}) by taking union bounds, but this will cause a worse upper bound $U(k,\beta)$, in terms of its dependence on $k$. Therefore, it is natural to ask:\\
% \centerline{\emph{Is it possible to establish an anytime guarantee (\ref{result-type-anytime}), with the upper bound no worse than that of (\ref{result-type-fixk})?}}

% In this paper, we leverage a new continuous-time perspective to close this gap. We provide the first anytime high probability result of the form (\ref{result-type-anytime}) for stochastic gradient methods, as well as 
% % improve existing results in several other aspects.
% give an affirmative answer to the above question.
% Our main contributions are as follows.

In this paper, we leverage a new continuous-time perspective to provide the anytime high probability result of the form (\ref{result-type-anytime}) for stochastic gradient methods. Notably, our \emph{anytime} convergence achieves the same rate as the state-of-the-art results for \emph{a single} iterate \citep{lan2012optimal,liu2023high}. Our main contributions are as follows.

% \textcolor{red}{checked till here}

\begin{enumerate}
    \item We are, to the best of our knowledge, the first to rigorously show that the continuous-time $L_2$-norm limit of a momentumized stochastic gradient method is a deterministic second-order ODE. We are the first to rigorously handle the quadratic variation process in analyzing the continuous-time limit of momentumized stochastic gradient method. This quadratic variation term is crucial in It\^{o}'s calculus. We develop convergence results for this ODE through a Lyapunov function, and then use it as a guidance for the algorithm analysis. 
    % More specifically, we develop convergence results for the ODE through a Lyapunov function, and then translate the whole argument to the discrete-time case. 
    % This approach better reveals the intrinsic dynamic property of SGDM, leading to the new convergence results.

    \item We translate the whole continuous-time argument to the discrete-time case to construct the Lyapunov function of SGDM, $\ene(k)$, and establish its descent property. This approach reveals the intrinsic dynamic property of SGDM. Furthermore, the increment of the Lyapunov function, $\ene(k+1)-\ene(k)$, is itself bounded by its previous value. As a consequence, our discrete-time analysis does not rely on any projection, or the assumption of bounded gradients or stochastic gradients.

    \item We establish the first explicit \emph{anytime} high probability convergence guarantee for stochastic gradient methods. We prove that the sequence $\{ x_k \}$ governed by running SGDM on a smooth convex function $f$ satisfies
    \begin{align*}\label{eq:anytime-intro}
        \Pr \left(f (x_k) - f^* \le\frac{C\log k\left(1+\log\frac{1}{\beta}\right)}{\sqrt{k}},\;\text{for all $k$}\right)
        \ge1 - \beta
    \end{align*}
    for any $\beta$, where $C$ is a constant. As far as we know, no prior research on stochastic gradient methods has studied the probability that the convergence bound holds simultaneously over iterates. In this work, we propose a novel procedure for proving this anytime high probability convergence without losing any factors in $k$, compared to its single-iterate counterpart. 
    % More specifically, we first demonstrate that SGDM converges on almost every sample path through a supermartingale convergence argument, and then fuse this result with our high probability analysis. We utilize the discrete-time Lyapunov function, introduced from our continuous-time analysis, as the pivotal element that unites these two methodologies, culminating in the establishment of the anytime high probability convergence bound.
    % More specifically, we use the discrete-time Lyapunov function $\ene(k)$ as the pivotal element in our argument. 
    % % Based on the descent property of $\ene(k)$, we modify its moment generating function to construct a supermartingale. 
    % Based on the descent property of $\ene(k)$, we derive a Gronwall-type inequality. Subsequently, we modify the moment generating function of $\ene(k)$ to construct a supermartingale, which leads to the anytime convergence bound.
    More specifically, based on the descent property of the Lyapunov function, we derive a Gronwall-type inequality. Then our key approach is to construct a supermartingale by altering the moment generating function of $\ene(k)$, which leads to the anytime convergence bound.

    % \item 
    % % We also provide a new technique in the high probability analysis. 
    % In the high probability analysis,
    % We prove that the martingale difference sequence arising in the argument can be bounded by our discrete-time Lyapunov function. As a consequence, the anytime convergence guarantee (\ref{eq:anytime-intro}) does not rely on any projection,
    % or the assumption of bounded gradients or stochastic gradients.
    % % In addition, our result is for the last iterate, and does not need any averaging step.

    \item For smooth convex objective functions, we show that with probability $1$ there exists a sub-sequence $\{x_{k_l}\}$ of $\{x_k\}$ generated by SGDM satisfies $f(x_{k_l})-f^*=o\left(\frac{1}{\sqrt{k_l}\log\log k_l}\right)$. 
    This rate is new to the best of our knowledge.
\end{enumerate}

\subsection{Comparison to existing results}
We illustrate the qualitative and quantitative improvements of our \emph{anytime} high probability convergence results.
The high probability guarantees for stochastic gradient methods were obtained in  \cite{lan2012optimal, harvey2019tight, liu2023high, liu2023revisiting}. 
It has been shown that for a fix number of iterations $k$, with probability greater than $1-\beta$,
\begin{equation*}
 \frac{1}{k}\sum_{l=1}^{k}f(x_l)-f^*\leq U(k,\beta) \quad \text{and} \quad  f(x_k)-f^*\leq U(k,\beta), 
\end{equation*}
where $U(k,\beta)$\footnote{The bounds here are adapted to the setting of the current paper, where the learning rate is independent of $\beta$, see \cite[Theorem C.2]{liu2023revisiting} for full details.} is the asymptotic convergence rate given by  
\begin{equation*}
    U(k,\beta)=C\left(1+\log\frac{1}{\beta}\right)\frac{\log k}{\sqrt{k}}.
\end{equation*}
One may derive a naive anytime high probability guarantee by setting $\beta=\beta/k^{2}$ and taking the union bounds.
This indeed leads to 
\begin{equation*}
    \Pr\left(f(x_k)-f^*\leq \widetilde{U}(k,\beta),\;\text{for all $k$}\right)\geq 1-\mathcal{O}(\beta),
\end{equation*}
with 
\begin{equation*}
    \widetilde{U}(k,\beta)=U(k,\beta)\log k.
\end{equation*}
Our anytime high probability convergence guarantee, however, shows that the extra $\log k$  factor can be reduced.
The \emph{anytime} convergence rate we obtained in Theorem \ref{thm:real-anytime} is (surprisingly) of the same order as the ones for the average iterate and the last iterate.
Therefore, our \emph{anytime} convergence guarantee achieves an $\mathcal{O} (\log k)$ improvement compared to the state-of-the-art result \citep{liu2023high}. 
We remark that it is due to the novel energy estimate in Lemma \ref{lem:explicit-decay-difference-alpha} and an application of the supermartingale inequality.

Another potential approach to derive \emph{anytime} high probability convergence is utilizing the almost sure convergence results. \citet{sebbouh2021almost} derived almost sure convergence for various stochastic optimization algorithms, and in particular, the last-iterate almost sure convergence  of stochastic heavy-ball algorithm in Theorem 13.
By taking $\eta_k=\log k/\sqrt{k}$, it has shown that, \emph{almost surely}, $f(x_k)-f^*= o(\log k/\sqrt{k})$.
In other words, for any $\beta$, there exists $C(\beta)$ such that 
\begin{equation*}
    \Pr\left(f(x_k)-f^*\leq C(\beta)\frac{\log k}{\sqrt{k}},\;\text{for all $k$}\right)\geq 1-\beta.
\end{equation*}
However, it is unclear how the constant $C(\beta)$ depends on $\beta$.
Our results answer this question with an explicit $C(\beta)=C(1+\log(1/\beta))$ under the sub-Gaussian assumption on the noise. 

\subsection{Existing High Probability Results}\label{sec:literature}

% \begin{wrapfigure}{r}{0.58\textwidth} 
%     \centering
%     \input{tex/fig} 
%     \caption{The framework of our theoretical analysis.}
%     \label{fig:structure} 
% \end{wrapfigure} 

% {\color{red}
% In addition to the absence of anytime convergence guarantee, a complete understanding of the convergence of stochastic gradient methods is still lacking in several other aspects. Specifically, the existing results either require impractical algorithmic steps or make strong assumptions about the stochastic gradients.
% }

% Despite substantial advancements in high probability analysis, a complete understanding of the convergence properties of stochastic gradient methods is still lacking. The existing results either require impractical algorithmic steps or make strong assumptions about the stochastic gradients. 

There have been substantial advancements in establishing high probability guarantees for stochastic gradient methods. \citet{nemirovski2009robust} established convergence guarantees for a robust stochastic approximation approach. \citet{lan2012optimal} proposed an accelerated method that is universally optimal for solving non-smooth, smooth, and stochastic problems. \citet{ghadimi2013stochastic} developed
a two-phase method to improve the large deviation properties.
% an algorithm involving a step that randomly samples an iterate to output.
% , which plays the same role as the averaging in theoretical analysis.
% \citet{yan2018unified} presented a global convergence result for stochastic momentum methods under the assumption of bounded gradients.
% \citet{li2019convergence} provided theoretical results for SGD with AdaGrad stepsizes. 
\citet{nazin2019algorithms}, \citet{madden2020high}, \citet{gorbunov2020stochastic}, \citet{cutkosky2021high}, and \citet{li2022high} proposed high probability convergence results without the light-tailed assumption.
% focused on handling heavy-tail noise by employing a clipping technique. \citet{cutkosky2021high} provided high probability guarantees for non-convex stochastic optimization with heavy tails.
\citet{li2020high} and \citet{kavis2022high} studied stochastic gradient methods with adaptive stepsize.
% \citet{assran2020convergence} studied the convergence of Nesterov's accelerated gradient method in stochastic settings. 
\citet{liu2023high} provided a new concentration argument to improve the high probability analysis.
The last-iterate high probability convergence results for SGD is studied by \citet{harvey2019tight}. \citet{jain2021making} introduced a stepsize sequence to obtain the optimal last-iterate convergence bound. \citet{liu2023revisiting} also presented a unified way for enhancing the last-iterate convergence analysis of stochastic gradient methods.
Yet all of the existing works provide high probability results that hold for individual $k$, and none of them study the anytime convergence, or more specifically, the probability that the convergence guarantee holds simultaneously over all values of $k$. A survey of more related works on stochastic optimization is provided in Appendix \ref{sec:additional}, including existing in expectation convergence results.

% \begin{wrapfigure}{r}{0.56\textwidth}
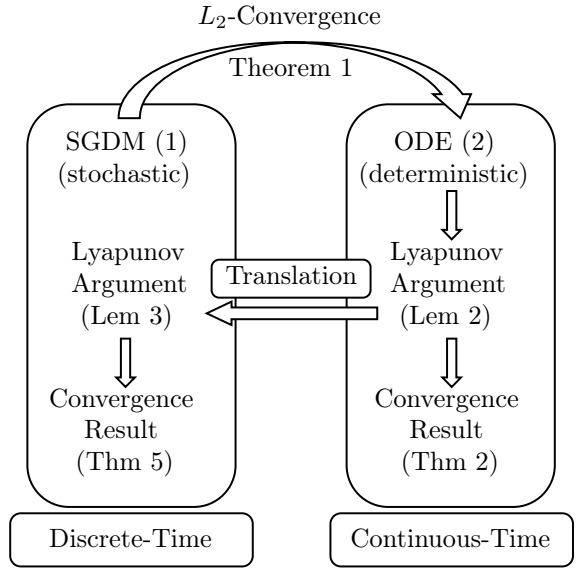
\begin{figure}[h]
    \centering
    \tikzset{every picture/.style={line width=0.75pt}} %set default line width to 0.75pt        

\begin{tikzpicture}[x=0.75pt,y=0.75pt,yscale=-1,xscale=1]
%uncomment if require: \path (0,419); %set diagram left start at 0, and has height of 419

%Rounded Rect [id:dp7544770709670023] 
\draw   (13.83,77) .. controls (13.83,65.4) and (23.24,56) .. (34.83,56) -- (97.83,56) .. controls (109.43,56) and (118.83,65.4) .. (118.83,77) -- (118.83,238.22) .. controls (118.83,249.82) and (109.43,259.22) .. (97.83,259.22) -- (34.83,259.22) .. controls (23.24,259.22) and (13.83,249.82) .. (13.83,238.22) -- cycle ;
%Rounded Rect [id:dp61090525943741] 
\draw   (5.17,267.78) .. controls (5.17,264.82) and (7.57,262.42) .. (10.53,262.42) -- (122.13,262.42) .. controls (125.1,262.42) and (127.5,264.82) .. (127.5,267.78) -- (127.5,283.88) .. controls (127.5,286.85) and (125.1,289.25) .. (122.13,289.25) -- (10.53,289.25) .. controls (7.57,289.25) and (5.17,286.85) .. (5.17,283.88) -- cycle ;
%Rounded Rect [id:dp5360709858601849] 
\draw   (166.67,267.78) .. controls (166.67,264.82) and (169.07,262.42) .. (172.03,262.42) -- (283.63,262.42) .. controls (286.6,262.42) and (289,264.82) .. (289,267.78) -- (289,283.88) .. controls (289,286.85) and (286.6,289.25) .. (283.63,289.25) -- (172.03,289.25) .. controls (169.07,289.25) and (166.67,286.85) .. (166.67,283.88) -- cycle ;
%Rounded Rect [id:dp16749623166370275] 
\draw   (174.83,77) .. controls (174.83,65.4) and (184.24,56) .. (195.83,56) -- (258.83,56) .. controls (270.43,56) and (279.83,65.4) .. (279.83,77) -- (279.83,238.22) .. controls (279.83,249.82) and (270.43,259.22) .. (258.83,259.22) -- (195.83,259.22) .. controls (184.24,259.22) and (174.83,249.82) .. (174.83,238.22) -- cycle ;
%Left Arrow [id:dp6692487577099138] 
\draw  [fill={rgb, 255:red, 255; green, 255; blue, 255 }  ,fill opacity=1 ] (104.28,163.5 - 2) -- (116.79,157.58 - 2) -- (116.79,160.54 - 2) -- (190.28,160.54 - 2) -- (190.28,166.46 - 2) -- (116.79,166.46 - 2) -- (116.79,169.42 - 2) -- cycle ;
%Rounded Rect [id:dp5366245229169764] 
\draw  [color={rgb, 255:red, 0; green, 0; blue, 0 }  ,draw opacity=1 ][fill={rgb, 255:red, 255; green, 255; blue, 255 }  ,fill opacity=1 ] (106.75,140.25 - 2) .. controls (106.75,138.13 - 2) and (108.47,136.42 - 2) .. (110.58,136.42 - 2) -- (185 - 1.92,136.42 - 2) .. controls (187.12 - 1.92,136.42 - 2) and (188.83 - 1.92,138.13 - 2) .. (188.83 - 1.92,140.25 - 2) -- (188.83 - 1.92,151.75 - 2) .. controls (188.83 - 1.92,153.87 - 2) and (187.12 - 1.92,155.58 - 2) .. (185 - 1.92,155.58 - 2) -- (110.58,155.58 - 2) .. controls (108.47,155.58 - 2) and (106.75,153.87 - 2) .. (106.75,151.75 - 2) -- cycle ;
%Curve Right Arrow [id:dp70758106270514] 
\draw  [fill={rgb, 255:red, 255; green, 255; blue, 255 }  ,fill opacity=1 ] (154.09,24.72) .. controls (107.95,24.72) and (70.55,41.86) .. (70.55,63) -- (59.83,63) .. controls (59.83,41.86) and (97.23,24.72) .. (143.37,24.72) ;\draw  [fill={rgb, 255:red, 255; green, 255; blue, 255 }  ,fill opacity=1 ] (143.37,24.72) .. controls (180.26,24.72) and (211.56,35.68) .. (222.63,50.88) -- (218.42,50.88) -- (232.26,63) -- (237.56,50.88) -- (233.35,50.88) .. controls (222.28,35.68) and (190.98,24.72) .. (154.09,24.72)(143.37,24.72) -- (154.09,24.72) ;
%Down Arrow [id:dp9361987308038395] 
\draw   (232.53,119.36) -- (230.18,119.36) -- (230.18,99.76) -- (225.49,99.76) -- (225.49,119.36) -- (223.14,119.36) -- (227.83,124.17) -- cycle ;
%Down Arrow [id:dp2615641173486658] 
\draw   (71.03 - 3,195.86 - 2) -- (68.68 - 3,195.86 - 2) -- (68.68 - 3,174.26) -- (63.99 - 3,174.26) -- (63.99 - 3,195.86 - 2) -- (61.64 - 3,195.86 - 2) -- (66.33 - 3,200.67 - 2) -- cycle ;
%Down Arrow [id:dp5088567536699184] 
\draw   (232.53,195.86 - 2) -- (230.18,195.86 - 2) -- (230.18,174.26) -- (225.49,174.26) -- (225.49,195.86 - 2) -- (223.14,195.86 - 2) -- (227.83,200.67 - 2) -- cycle ;

% Text Node
\draw (27.33,67.5) node [anchor=north west][inner sep=0.75pt]   [align=left] {\ SGDM (\ref{eq:SGDM-intro})\\(stochastic)};
% Text Node
\draw (179.33,67.5) node [anchor=north west][inner sep=0.75pt]   [align=left] { \ \ \ \ ODE (\ref{eq:SGDM-ODE-intro})\\(deterministic)};
% Text Node
\draw (195.33,124) node [anchor=north west][inner sep=0.75pt]   [align=left] {Lyapunov \\Argument \\ \ (Lem \ref{lemma:energy-alpha})};
% Text Node
\draw (34.5,124) node [anchor=north west][inner sep=0.75pt]   [align=left] {Lyapunov\\Argument\\ \ (Lem \ref{lem:explicit-decay-difference-alpha})};
% Text Node
\draw (187.33,197.96) node [anchor=north west][inner sep=0.75pt]   [align=left] {Convergence\\ \ \ \ \ Result \\ \ \ \ (Thm \ref{thm:ode-rate})};
% Text Node
\draw (18.83,197.96) node [anchor=north west][inner sep=0.75pt]   [align=left] {\ Convergence\\ \ \ \ \ \ Result \\ \ \ \ \ (Thm \ref{thm:real-anytime})};
% Text Node
\draw (98.29,4.67) node [anchor=north west][inner sep=0.75pt]   [align=left] {$L_2$-Convergence};
% Text Node
\draw (113.79,31) node [anchor=north west][inner sep=0.75pt]   [align=left] {Theorem \ref{thm:sgdmtoode}};
% Text Node
\draw (112.29,138 - 2) node [anchor=north west][inner sep=0.75pt]   [align=left] {Translation};
% Text Node
\draw (23.33,268.83) node [anchor=north west][inner sep=0.75pt]   [align=left] {Discrete-Time};
% Text Node
\draw (175.33,268.83) node [anchor=north west][inner sep=0.75pt]   [align=left] {Continuous-Time};

\end{tikzpicture}
    \caption{The framework of our theoretical analysis.}
    \label{fig:structure}
\end{figure}
% \end{wrapfigure}

This paper is organized as follows. In Section \ref{sec:continuous}, we prove that the continuous-time limit of SGDM is the ODE (\ref{eq:SGDM-ODE-intro}), and then construct a Lyapunov function to give the convergence result of  (\ref{eq:SGDM-ODE-intro}). In Section \ref{sec: convergence-in-expectation-SGDM}, we translate the Lyapunov argument to the discrete case, which yields the discrete-time dynamical property. We then prove the convergence of SGDM in expectation. In addition, we demonstrate the almost sure convergence by the supermartingale convergence theorem. Moving on to Section \ref{sec:hp}, we utilize the above results and
% provide the convergence result of SGDM with high probability, without requiring any projection or bounded gradient assumption. We accomplish this by utilizing our Lyapunov function. Finally, we make use of all the above results and 
establish the anytime high probability convergence guarantee of SGDM. We present the experimental results in Section \ref{sec:exp}, which confirm our theoretical findings. Figure \ref{fig:structure} illustrates the entire framework of our theoretical analysis.

\section{Continuous-Time Analysis} \label{sec:continuous}

In this section, we show that the continuous-time limit of SGDM is the ODE (\ref{eq:SGDM-ODE-intro}), and construct a Lyapunov function to derive the convergence result of (\ref{eq:SGDM-ODE-intro}). Our analysis relies on the following assumption on the objective function $f$.

\begin{assumption}\label{assump:convex-smooth}
    $f$ is convex and $L$-smooth, that is, $\|\nabla f(x)-\nabla f(y)\|\leq L\|x-y\|$ for any $x,y\in\R^n$. There exists $x^*=\arg\min_{x\in\R^n}f(x)$.
\end{assumption}

\subsection{Derivation of the ODE Model}\label{sec:derivation-ode}

% To derive the continuous-time model of SGDM, we make the following assumption, and it is only used in this section.
% \begin{assumption}\label{ass:gaussnoise}
%     The stochastic gradient $g(x,\xi)$ in (\ref{eq:SGDM-intro}) equals to $\nabla f(x)+\xi$, and $\xi\sim\mathcal{N}(0, 1)$.
% \end{assumption}
% % assume that . This assumption is \emph{only} used in Section \ref{sec:derivation-ode}. 
We rewrite (\ref{eq:SGDM-ODE-intro}) as the following system
\begin{align}\label{eq:odesys}
    \left\{\begin{aligned}
        &\frac{\df X}{\df t}=V,\\
        &\frac{\df V}{\df t}=-\frac{2}{t}V-\frac{2}{{t}^\frac{3}{2}}\nabla f(X).
    \end{aligned}\right.
\end{align}
Theorem \ref{thm:sgdmtoode} shows that the stochastic sequence $\{x_k\}$ generated by SGDM converges to the trajectory of the deterministic ODE system (\ref{eq:odesys}) as the stepsize goes to zero. For our purpose, we consider the ODE system (\ref{eq:odesys}) starting at an arbitrary
$T_0 > 0$. 

\begin{theorem}\label{thm:sgdmtoode}
    % Assume that $f$ is $L$-smooth, and the stochastic gradient satisfies $g(x,\xi)=\nabla f(x)+\xi$, where $\xi\sim\mathcal{N}(0,1)$. Let $\{(x_k, v_k)\}$ be generated by SGDM (\ref{eq:SGDM-intro}), and $(X(t), V(t))$ be the solution of the ODE system (\ref{eq:odesys}). If $X(T_0)=x_{T_0/\eta}$ and $V(T_0)=v_{T_0/\eta}$ for $T_0>0$, then'
    Assume that $f$ is $L$-smooth. Let $(X(t), V(t))$ be the solution to the ODE system (\ref{eq:odesys}), and $\{(x_k, v_k)\}$ be generated from SGDM (\ref{eq:SGDM-intro}) with $g(x_k,\xi_k)=\nabla f(x_k)+\xi_k$, where $\xi_k\overset{iid}{\sim}\mathcal{N}(0,I_n)$. If $X(T_0)=x_{T_0/\eta}$ and $V(T_0)=v_{T_0/\eta}$ for $T_0>0$, then
    \begin{align*}
        \lim_{\eta\to0}\E\left[\|x_{T/\eta}-X(T)\|^2\right]=0
    \end{align*}
    holds for any $T>T_0$. Note that throughout we shell use the convention that $T_0/\eta$ and $T/\eta$ in subscripts of $x$ and $v$ are understood as their integer part.
\end{theorem}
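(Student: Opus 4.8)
The plan is to cast both the discrete recursion and the ODE in a common integrating-factor form, so that the difference between them splits into a deterministic discretization error and a purely stochastic martingale term whose second moment is controlled through its quadratic variation. Writing $t_k = k\eta$ and eliminating the implicit $v_k$ on the right-hand side of the velocity update gives $v_k = \frac{k}{k+2}v_{k-1} - \frac{2}{(k+2)\sqrt{k\eta}}g(x_k,\xi_k)$. Multiplying by the summation factor $a_k := (k+1)(k+2)$ telescopes this into
\[
a_k v_k = a_{k_0}v_{k_0} - \sum_{j=k_0+1}^{k}\frac{2(j+1)}{\sqrt{j\eta}}\big(\nabla f(x_j)+\xi_j\big), \qquad k_0 = T_0/\eta,
\]
which is the exact discrete counterpart of the identity $t^2 V(t) = T_0^2 V(T_0) - \int_{T_0}^{t} 2s^{1/2}\nabla f(X(s))\,\df s$ obtained by multiplying the ODE by $t^2$. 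Since $a_k \sim t_k^2/\eta^2$ and $\frac{2(j+1)}{\sqrt{j\eta}}\sim 2\sqrt{t_j}/\eta$, these two representations match term by term up to Riemann-sum errors, and an analogous representation for $x_k$ follows by summing $x_k - x_{k-1} = \eta v_{k-1}$.

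The crucial observation is that, because $x_j$ is $\mathcal{F}_{j-1}$-measurable (it does not involve $\xi_j$) while $\xi_j$ is independent of $\mathcal{F}_{j-1}$, the term $M_k := -\sum_{j=k_0+1}^{k}\frac{2(j+1)}{\sqrt{j\eta}}\xi_j$ is a martingale whose increments are orthogonal in $L_2$. Its quadratic variation is therefore
\[
\E\|M_k\|^2 = \sum_{j=k_0+1}^{k}\frac{4(j+1)^2}{j\eta}\,n \;\sim\; \frac{2n}{\eta}\big(k^2-k_0^2\big).
\]
Dividing by $a_k^2 \sim k^4$ shows the noise contribution to $v_k$ has variance $O(\eta)$; and interchanging the order of summation in $x_k = x_{k_0}+\eta\sum_j v_j$ (using $\sum_{j\ge i} a_j^{-1} \sim i^{-1}$) shows the noise contribution to $x_k$ has variance $O\!\big(\eta\log(T/T_0)\big)=O(\eta)$ uniformly on $[T_0,T]$. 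This is exactly the step where the quadratic variation must be handled rigorously, and it is the reason the stochasticity disappears in the $\eta\to0$ limit.

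With the stochastic term controlled, I would close the argument by a discrete Gronwall estimate for the deterministic error. Subtracting the discrete representation from its continuous counterpart, the difference $\|x_k-X(t_k)\|$, coupled with $\|v_k-V(t_k)\|$ (or a suitable rescaled combination), is bounded by the accumulated Riemann-sum/Euler discretization error, the gradient mismatch $\|\nabla f(x_j)-\nabla f(X(t_j))\|\le L\|x_j-X(t_j)\|$ (here $L$-smoothness enters), and the martingale term just estimated. Taking expectations and applying the discrete Gronwall inequality over $[T_0,T]$ then yields $\E\|x_{T/\eta}-X(T)\|^2 = O(\eta) + o(1) \to 0$. A preliminary step is to establish uniform-in-$\eta$ second-moment bounds on $\{v_k\}$ and $\{x_k\}$ on this interval, which follow from the same integrating-factor representation together with the $O(\eta)$ noise bound and the contractive $2/t$ damping.

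I expect the main obstacle to be the rigorous treatment of the martingale/quadratic-variation term and its interaction with the Gronwall loop: one must show that the accumulated noise, after being propagated through the position--velocity coupling and the gradient feedback, remains $O(\eta)$ rather than amplifying. The coefficients $2/t$ and $2/t^{3/2}$ are singular at $t=0$, so all constants in the Gronwall bound depend on $T_0$; this is precisely why the theorem is stated for the system started at an arbitrary $T_0>0$, and why I would keep every estimate on the fixed interval $[T_0,T]$ where these coefficients are bounded.
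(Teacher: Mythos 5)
Your proposal is correct and takes a genuinely different route from the paper's proof. The paper never manipulates the discrete recursion directly: it builds an auxiliary SDE system (\ref{eq:auxsde}) driven by a Brownian motion scaled by $\sqrt{\eta}$, chosen so that its grid values $\bigl(\wX(t_k),\wV(t_k)\bigr)$ are equal in distribution to $(x_k,v_k)$; after the uniform $L_2$ bounds of Lemma \ref{lem:boundwY}, it applies It\^{o}'s formula to $\|\wX-X\|^2$ and $\|\wV-V\|^2$, where the noise enters through the quadratic-variation term $\df\mathop{\mathrm{tr}}\bigl[\wV-V\bigr](t)=\frac{4n}{t_k^3}\eta\,\df t$ and a stochastic integral of zero mean, and it closes with the continuous Gr\"{o}nwall inequality to get $\E\bigl[\|\wY(T)-Y(T)\|^2\bigr]=O(\eta)$. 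You stay entirely in discrete time: the exact summation factor $a_k=(k+1)(k+2)$ (the discrete analogue of multiplying the ODE by $t^2$) turns the velocity recursion into a variation-of-constants identity, the noise separates out as the martingale $M_k$ whose variance you compute by orthogonality of increments --- $O(\eta)$ in velocity after dividing by $a_k$, and $O(\eta\log(T/T_0))$ in position after exchanging the order of summation; both computations check out, since $\sum_{j\geq i}a_j^{-1}=\frac{1}{i+1}-\frac{1}{k+1}$ telescopes exactly --- and a discrete Gr\"{o}nwall argument absorbs the $L$-Lipschitz gradient feedback. Both routes rest on the same preliminary ingredient, uniform-in-$\eta$ second-moment bounds (the paper's Lemma \ref{lem:boundwY}; your ``preliminary step''), and both deliver the same $O(\eta)$ rate. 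Your approach buys elementariness: no SDE interpolation and no It\^{o} calculus, only discrete martingale orthogonality, and it makes the mechanism by which the stochasticity vanishes in the limit completely explicit. Its cost is generality: the exact telescoping of $a_k$ is special to the $2/k$ damping, whereas the paper's interpolation-plus-It\^{o} technique, which isolates the vanishing noise as a quadratic-variation term, extends more readily to schemes whose continuous-time limit retains a genuine diffusion term (the contrast the paper draws with prior SDE-limit work). Your sketch leaves the Riemann-sum error estimates and the coupled position-velocity Gr\"{o}nwall loop unexecuted, but the quantities you propose to control are the right ones and those remaining steps are routine.
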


The proof of Theorem \ref{thm:sgdmtoode} relies on constructing auxiliary SDEs with diminishing stochastic term, and stochastic calculus techniques calibrated to our setting. Before presenting the detailed proof, we shall give an informal derivation of the ODE (\ref{eq:SGDM-ODE-intro}) and (\ref{eq:odesys}).

% To derive the continuous version of (\ref{eq:SGDM-intro}), we assume that the stochastic gradient $\widehat{\nabla}f(x)$ equals to $\nabla f(x)+\xi$, where $\xi\sim\mathcal{N}(0, 1)$. 
We rewrite SGDM scheme (\ref{eq:SGDM-intro}) as
\begin{align}\label{eq:SGDMtoODE}
    \left\{\begin{aligned}
        &x_k-x_{k-1}=\eta v_{k-1},\\
        &v_k-v_{k-1}=-\frac{2}{k\eta}v_k\eta-\frac{2}{(k\eta)^\frac{3}{2}}\nabla f(x_k)\eta-\frac{2}{(k\eta)^\frac{3}{2}}(\sqrt{\eta}\xi_k)\sqrt{\eta}.
    \end{aligned}\right.
\end{align}
Let $W$ be a standard Brownian motion. Then we let $\eta$ in (\ref{eq:SGDMtoODE}) go to zero and replace $x_k-x_{k-1}$, $v_k-v_{k-1}$, $k\eta$, $\eta$ and $\sqrt{\eta}\xi$ by $\df X$, $\df V$, $t$, $\df t$ and $dW$, respectively. The discrete scheme (\ref{eq:SGDMtoODE}) becomes
\begin{align*}
    \left\{\begin{aligned}
        &\df X=V\df t,\\
        &\df V=-\frac{2}{t}V\df t-\frac{2}{{t}^\frac{3}{2}}\nabla f(X)\df t-\frac{2}{t^{\frac{3}{2}}}\sqrt{\df t}\df W.
    \end{aligned}\right.
\end{align*}
Since $\sqrt{\df t}\df W$ is a higher order term, we omit it and obtain (\ref{eq:odesys}), which leads to the ODE (\ref{eq:SGDM-ODE-intro}) by subscribing the first line to the second line.

\begin{proof}[Proof of Theorem \ref{thm:sgdmtoode}]
    For any fixed $\eta>0$, we write $t_k=k\eta$ and consider an auxiliary SDE system $\bigl(\wX(t),\wV(t)\bigr)$ on $[T_0, T]$ with initialization $\wX(T_0)=x_{T_0/\eta}$ and $\wV(T_0)=v_{T_0/\eta}$, which is defined as 
    % We write $t_k=k\eta$ and consider the following auxiliary SDE
    \begin{align}\label{eq:auxsde}
    \left\{\begin{aligned}
        &\df\wX(t)=\wV(t_k)\df t,\\
        &\df\wV(t)=-\frac{2}{t_k}\wV(t_k)\df t-\frac{2}{t_k^\frac{3}{2}}\nabla f\left(\wX(t_k)\right)\df t-\frac{2}{t_k^{\frac{3}{2}}}\sqrt{\eta}dW(t),
    \end{aligned}\right.
    \end{align}
    for $t\in[t_k,t_{k+1})$. Integrating sequentially on each interval $[t_k,t_{k+1})$ shows that $\bigl(\wX(t_k),\wV(t_k)\bigr)$ and $(x_{k},v_k)$ are identically distributed for each $T_0\leq t_k\leq T$. For convenience, we denote 
    \begin{align*}
        \wY(t)=\begin{pmatrix}
            \wX(t)\\
            \wV(t)
        \end{pmatrix}\quad\text{and}\quad Y(t)=\begin{pmatrix}
            X(t)\\
            V(t)
        \end{pmatrix},
    \end{align*}
    where $X(t)$ and $V(t)$ are defined in the ODE system (\ref{eq:odesys}), and thus $\E\Bigl[\bigl\|x_{T/\eta}-X(T)\bigr\|^2\Bigr]=\E\Bigl[\bigl\|\wX(T)-X(T)\bigr\|^2\Bigl]\leq\E\Bigl[\bigl\|\wY(T)-Y(T)\bigr\|^2\Bigr]$.
    % $\E\|x_{T/\eta}-X(T)\|^2=\E\lnorm\wX(T)-X(T)\rnorm^2\leq\E\lnorm\wY(T)-Y(T)\rnorm^2$. 
    % For fixed horizon $[T_0, T]$, there exists a constant $C_1$ such that $\|Y(t)\|\leq C_1$ for any $T_0\leq t\leq T$. 
    The following lemma shows that $\{\wY(t_k)\}$ are uniformly bounded in $L_2$-norm.

    \begin{lemma}\label{lem:boundwY}
        Let $[T_0, T]$ be a fixed horizon. There exists a constant $C_1$ such that for any $\eta>0$,
        \begin{equation}\label{eq:lem-bounded-1}
            \max_{\frac{T_0}{\eta}\leq k\leq\frac{T}{\eta}}\E\Bigl[\bigl\|\wY(t_k)\bigr\|^2\Bigr]\leq C_1,
        \end{equation}
        where $t_k=k\eta$. Furthermore, there exists a constant $C_2$ such that for any $\eta>0$ and any $T_0\leq t_k\leq t\leq t_{k+1}\leq T$,
        \begin{align}\label{eq:lem-bounded-2}
            \E\Bigl[\bigl\|\wY(t)-\wY(t_k)\bigr\|^2\Bigr]\leq C_2\eta^2.
        \end{align}
    \end{lemma}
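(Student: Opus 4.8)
The plan is to establish the uniform second-moment bound (\ref{eq:lem-bounded-1}) first, by a discrete Gronwall argument on the grid $\{t_k\}$, and then to deduce the short-horizon bound (\ref{eq:lem-bounded-2}) directly from it. Write $M_k = \E\bigl[\|\wY(t_k)\|^2\bigr]$ and $\Delta W_k = W(t_{k+1}) - W(t_k) \sim \mathcal{N}(0, \eta I_n)$. Since the coefficients of (\ref{eq:auxsde}) are frozen at $t_k$ on the interval $[t_k, t_{k+1})$, the one-step increment is the explicit quantity $\wY(t_{k+1}) - \wY(t_k) = A_k + B_k$, where $A_k = \bigl(\wV(t_k)\eta,\ -\tfrac{2}{t_k}\wV(t_k)\eta - \tfrac{2}{t_k^{3/2}}\nabla f(\wX(t_k))\eta\bigr)^{\tr}$ is the $\F_{t_k}$-measurable drift and $B_k = \bigl(0,\ -\tfrac{2}{t_k^{3/2}}\sqrt{\eta}\,\Delta W_k\bigr)^{\tr}$ is the martingale part, which has conditional mean zero given $\F_{t_k}$.

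First I would expand $\|\wY(t_{k+1})\|^2 = \|\wY(t_k)\|^2 + 2\langle \wY(t_k), A_k + B_k\rangle + \|A_k + B_k\|^2$ and take expectations. The two cross terms containing $B_k$ vanish because $\Delta W_k$ is independent of $\F_{t_k}$ and centered, while $\E[\|B_k\|^2] = \tfrac{4n}{t_k^3}\eta^2 \le \tfrac{4n}{T_0^3}\eta^2$ by the It\^{o} isometry. For the drift contributions I would discard the dissipative term $-\tfrac{4\eta}{t_k}\|\wV(t_k)\|^2 \le 0$, bound the indefinite products $\langle \wX(t_k), \wV(t_k)\rangle$ and $\langle \wV(t_k), \nabla f(\wX(t_k))\rangle$ via Young's inequality, and control the (unbounded) gradient using $L$-smoothness through $\|\nabla f(\wX(t_k))\|^2 \le 2L^2\|\wX(t_k)\|^2 + 2L^2\|x^*\|^2$. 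Because $t_k \ge T_0 > 0$, every negative power of $t_k$ is at most the corresponding power of $1/T_0$, so all these terms close back onto $M_k$ and a constant, giving a recursion $M_{k+1} \le (1 + C\eta)M_k + C'\eta$ valid for $\eta \le 1$, with $C, C'$ depending only on $L, n, T_0, \|x^*\|$.

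Next I would iterate this recursion from $k = T_0/\eta$ up to any $k \le T/\eta$. As there are at most $(T - T_0)/\eta$ steps, each carrying the factor $1 + C\eta$, the discrete Gronwall inequality yields $M_k \le e^{C(T-T_0)}\bigl(M_{T_0/\eta} + C'/C\bigr)$. This is finite and, crucially, \emph{uniform in $\eta$}: the $1/\eta$ growth in the number of steps is exactly compensated by the factor $\eta$ in the per-step growth, so the exponent stays bounded by $CT$. Taking $C_1 = e^{CT}(\E[\|\wY(T_0)\|^2] + C'/C)$, which is finite since the initialization has bounded second moment, proves (\ref{eq:lem-bounded-1}).

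Finally, for (\ref{eq:lem-bounded-2}) I would estimate the increment directly on $[t_k, t]$ with $t - t_k \le \eta$. The position part gives $\E[\|\wX(t) - \wX(t_k)\|^2] = \E[\|\wV(t_k)\|^2](t - t_k)^2 \le C_1\eta^2$, and for the velocity part I would apply $\|a + b + c\|^2 \le 3(\|a\|^2 + \|b\|^2 + \|c\|^2)$ together with $t_k \ge T_0$, the moment bound $\E[\|\nabla f(\wX(t_k))\|^2] \le 2L^2(C_1 + \|x^*\|^2)$ inherited from (\ref{eq:lem-bounded-1}), and $\E[\|W(t) - W(t_k)\|^2] = n(t - t_k) \le n\eta$; each of the three velocity terms is then $O(\eta^2)$, and summing proves (\ref{eq:lem-bounded-2}). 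I expect the main obstacle to be the recursion step: keeping it closed despite the unbounded gradient (which forces the smoothness bound and feeds $M_k$ back in) while guaranteeing the Gronwall constant does not blow up as $\eta \to 0$. The two features that make this controllable are that $T_0 > 0$ bounds all the singular $t_k^{-1}$ and $t_k^{-3/2}$ coefficients, and that the momentum damping term is genuinely dissipative and can simply be dropped.
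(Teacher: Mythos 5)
Your proof is correct, and it shares the paper's overall skeleton — a discrete Gronwall recursion on the grid $\{t_k\}$, with $L$-smoothness giving $\|\nabla f(x)\|\le L\|x\|+L\|x^*\|$ and the floor $t_k\ge T_0>0$ taming the singular coefficients, followed by deducing (\ref{eq:lem-bounded-2}) from (\ref{eq:lem-bounded-1}) — but the recursion itself is run differently. The paper never expands the square and never uses the centering of the noise: it first proves the one-step bound $\E[\|\wY(t)-\wY(t_k)\|^2]\le 2C_3\eta^2\lb\E[\|\wY(t_k)\|^2]+1\rb$ (its (\ref{eq:deriveymy})) via the crude inequality $\|a+b\|^2\le2\|a\|^2+2\|b\|^2$, and then iterates on the square roots $a_n=\max_k\lb\E[\|\wY(t_k)\|^2]\rb^{1/2}$ using the triangle (Minkowski) inequality in $L_2$, which produces the per-step factor $1+\sqrt{2C_3}\eta$; estimate (\ref{eq:lem-bounded-2}) then follows by substituting (\ref{eq:lem-bounded-1}) back into that same one-step bound, so the increment estimate does double duty. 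You instead expand $\|\wY(t_{k+1})\|^2$ and use that $\Delta W_k$ is conditionally centered to cancel both cross terms, yielding a recursion directly on $M_k=\E[\|\wY(t_k)\|^2]$. Your route is the standard SDE second-moment technique and exploits the martingale structure (in principle giving tighter constants); the paper's route buys two small things in exchange: it uses nothing about the noise beyond its second moment, and — because its per-step factor is exactly linear in $\eta$, with no $\eta^2$ term to absorb — its constant $C_1$ is uniform over all $\eta>0$, whereas your recursion $M_{k+1}\le(1+C\eta)M_k+C'\eta$ needs the restriction $\eta\le1$ that you flag, even though the lemma is stated for any $\eta>0$. Since the lemma only matters in the limit $\eta\to0$ (and the paper itself assumes $\eta<1$ without loss of generality in the proof of Theorem \ref{thm:sgdmtoode}), this restriction is immaterial; but to match the statement verbatim you would either keep the $\eta^2$ terms explicit in the Gronwall factor, as the paper's square-root trick implicitly does, or dispose of large $\eta$ separately.
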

    \begin{proof}
        From (\ref{eq:auxsde}), it follows that
        \begin{equation*}
            \wY(t)-\wY(t_k)=\begin{pmatrix}
                \wV(t_k)\\
                -\frac{2}{t_k}\wV(t_k)-\frac{2}{t_k^{\frac{3}{2}}}\nabla f(\wX(t_k))
            \end{pmatrix}\cdot(t-t_k)
            +
            \begin{pmatrix}
                0\\
                -\frac{2}{t_k^{\frac{3}{2}}}\sqrt{\eta}(W(t)-W(t_k))
            \end{pmatrix},
        \end{equation*}
        for any $t_k\leq t\leq t_{k+1}$. By the $L$-smoothness,
        \begin{equation}\label{eq:bound-nabla}
            \|\nabla f(x)\|=\|\nabla f(x)-\nabla f(x^*)\|\leq L\cdot\|x-x^*\|\leq L\|x\|+L\|x^*\|.
        \end{equation}
        Therefore, we have
        \begin{align*}
            \bigl\|\wY(t)-\wY(t_k)\bigr\|^2\leq&2\lb\bigl\|\wV(t_k)\bigr\|^2+\frac{8}{t_k^2}\bigl\|\wV(t_k)\bigr\|^2+\frac{16L^2}{t_k^3}\lb\bigl\|\wX(t_k)\bigr\|^2+\|x^*\|^2\rb\rb(t-t_k)^2\\
            &+\frac{8}{t_k^3}\eta(W(t)-W(t_k))^2,
        \end{align*}
        and thus there exists a constant $C_3$, depending on $T_0$, $\|x^*\|$ and $L$, such that
        \begin{equation*}
            \bigl\|\wY(t)-\wY(t_k)\bigr\|^2\leq C_3\lb\bigl\|\wY(t_k)\bigr\|^2+1\rb(t-t_k)^2+C_3\eta(W(t)-W(t_k))^2.
        \end{equation*}
        Since $t\leq t_{k+1}=t_k+\eta$, taking expectation on both sides yields
        \begin{equation}\label{eq:deriveymy}
            \E\Bigl[\bigl\|\wY(t)-\wY(t_k)\bigr\|^2\Bigr]\leq2C_3\eta^2\lb\E\Bigl[\bigl\|\wY(t_k)\bigr\|^2\Bigr]+1\rb.
        \end{equation}
        Specifically, by letting $t=t_{k+1}$ we have
        \begin{align*}
            \E\Bigl[\bigl\|\wY(t_{k+1})-\wY(t_k)\bigr\|^2\Bigr]\leq2C_3\eta^2\lb\E\Bigl[\bigl\|\wY(t_k)\bigr\|^2\Bigr]+1\rb.
        \end{align*}
        We denote 
        \begin{align*}
            a_n=\max_{\frac{T_0}{\eta}\leq k\leq\frac{T_0}{\eta}+n}\lb\E\Bigl[\bigl\|\wY(t_k)\bigr\|^2\Bigr]\rb^{\frac{1}{2}},
        \end{align*}
        and we have
        \begin{align*}
            a_{n+1}\leq a_n+\lb\E\Bigl[\bigl\|\wY(t_{k+1})-\wY(t_k)\bigr\|^2\Bigr]\rb^\frac{1}{2}&\leq a_n+\sqrt{2C_3}\eta\lb\E\Bigl[\bigl\|\wY(t_k)\bigr\|^2\Bigr]+1\rb^\frac{1}{2}\\
            &\leq(1+\sqrt{2C_3}\eta)a_n+\sqrt{2C_3}\eta.
        \end{align*}
        Therefore, for $N=\frac{T-T_0}{\eta}$, we have
        \begin{align*}
            a_N&\leq(1+\sqrt{2C_3}\eta)^Na_0+\sqrt{2C_3}\eta\sum_{k=0}^{N-1}(1+\sqrt{2C_3}\eta)^k\\
            &\leq\lb1+\frac{\sqrt{2C_3}(T-T_0)}{N}\rb^Na_0+\frac{\sqrt{2C_3}(T-T_0)}{N}\cdot N\lb1+\frac{\sqrt{2C_3}(T-T_0)}{N}\rb^N\\
            &\leq e^{\sqrt{2C_3}(T-T_0)}\lb\E\Bigl[\bigl\|\wY(T_0)\bigr\|^2\Bigr]\rb^{\frac{1}{2}}+\sqrt{2C_3}(T-T_0)e^{\sqrt{2C_3}(T-T_0)}.
        \end{align*}
        Thus, we finish the proof of (\ref{eq:lem-bounded-1}) by letting 
        \begin{equation*}
            C_1=\lb e^{\sqrt{2C_3}(T-T_0)}\lb\E\Bigl[\bigl\|\wY(T_0)\bigr\|^2\Bigr]\rb^{\frac{1}{2}}+\sqrt{2C_3}(T-T_0)e^{\sqrt{2C_3}(T-T_0)}\rb^2.
        \end{equation*}
        For the second part, we apply (\ref{eq:lem-bounded-1}) to the RHS of (\ref{eq:deriveymy}) to get
        \begin{align*}
            \E\Bigl[\bigl\|\wY(t)-\wY(t_k)\bigr\|^2\Bigr]\leq2C_3\eta^2\lb C_1+1\rb= C_2\eta^2,
        \end{align*}
        where $C_2=2C_3(C_1+1)$.
    \end{proof}
    In the remaining proof, we will upper bound
    \begin{equation}\label{eq:decompose}
        \df\E\Bigl[\bigl\|\wY(t)-Y(t)\bigr\|^2\Bigr]=\df\E\Bigl[\bigl\|\wX(t)-X(t)\bigr\|^2\Bigr]+\df\E\Bigl[\bigl\|\wV(t)-V(t)\bigr\|^2\Bigr],
    \end{equation}
    and then apply Gr\"{o}nwall's inequality to $\E\Bigl[\bigl\|\wY(t)-Y(t)\bigr\|^2\Bigr]$ to get the convergence result of Theorem \ref{thm:sgdmtoode}.

    For the first term on the RHS of (\ref{eq:decompose}), the It\^{o}'s formula yields that
    \begin{align*}
        \df\bigl\|\wX(t)-X(t)\bigr\|^2&=2\bigl(\wX(t)-X(t)\bigr)^\tr \df\bigl(\wX(t)-X(t)\bigr)+ \df\mathop{\mathrm{tr}}\bigl[\wX-X\bigr](t),
    \end{align*}
    where $\bigl[\wX-X\bigr]$ is the quadratic variation process of $\wX -X$.
    Since, by definitions (\ref{eq:odesys}) and (\ref{eq:auxsde}), $\df\bigl(\wX(t)-X(t)\bigr)=\bigl(\wV(t_k)-V(t)\bigr)\df t$, $\wX-X$ is absolutely continuous with respect to $\df t$. Therefore, its quadratic variation must be zero, i.e., $\bigl[\wX-X\bigr](t)=0$. We thus derive
    \begin{align}
        \begin{split}\label{eq:dwx-m-x2}
            &\df\bigl\|\wX(t)-X(t)\bigr\|^2\\
            =&2\bigl(\wX(t)-X(t)\bigr)^\tr\bigl(\wV(t_k)-\wV(t)\bigr)\df t + 2\bigl(\wX(t)-X(t)\bigr)^\tr\bigl(\wV(t)-V(t)\bigr)\df t\\
            \leq&\bigl\|\wX(t)-X(t)\bigr\|^2\df t+\bigl\|\wV(t_k)-\wV(t)\bigr\|^2\df t+\bigl\|\wX(t)-X(t)\bigr\|^2\df t+\bigl\|\wV(t)-V(t)\bigr\|^2\df t.
        \end{split}
    \end{align}
    From (\ref{eq:lem-bounded-2}) we have $\E\lbm\bigl\|\wV(t_k)-\wV(t)\bigr\|^2\rbm\leq C_2\eta^2$. Therefore, taking expectation on both sides of (\ref{eq:dwx-m-x2}) gives
    \begin{align}\label{eq:bound-wx-x2}
        \df\E\lbm\bigl\|\wX(t)-X(t)\bigr\|^2\rbm\leq3\E\lbm\bigl\|\wY(t)-Y(t)\bigr\|^2\rbm\df t+C_2\eta^2\df t.
    \end{align}

    For the second term on the RHS of (\ref{eq:decompose}), the It\^{o}'s formula yields that
    \begin{align}\label{eq:wv-v2-ito}
        \df\bigl\|\wV(t)-V(t)\bigr\|^2&=2\left(\wV(t)-V(t)\right)^\tr \df\left(\wV(t)-V(t)\right)+ \df\mathop{\mathrm{tr}}\bigl[\wV-V\bigr](t).
    \end{align}
    The definitions (\ref{eq:odesys}) and (\ref{eq:auxsde}) yields that
    \begin{align*}
        \df \lb\wV(t)-V(t)\rb=&-\lb\frac{2}{t_k}\wV(t_k)-\frac{2}{t}V(t)\rb\df t-\lb\frac{2}{t_k^{\frac{3}{2}}}\nabla f(\wX(t_k))-\frac{2}{t^{\frac{3}{2}}}\nabla f(X(t))\rb\df t\\
        &-\frac{2}{t_k^{\frac{3}{2}}}\sqrt{\eta}\df W(t).
    \end{align*}
    Then the definition of quadratic variation gives
    \begin{align}\label{bound-wv-v2-1}
        \df  \mathop{\mathrm{tr}}\bigl[\wV-V\bigr](t)=\frac{4n}{t_k^3}\eta\df t\leq\frac{4n}{T_0^3}\eta\df t.
    \end{align} 
    We also have
    \begin{align}\label{bound-wv-v2-2}
        \begin{split}
            2\left(\wV(t)-V(t)\right)^\tr \df\left(\wV(t)-V(t)\right)=&\underbrace{-\lb\wV(t)-V(t)\rb^\tr\lb\frac{4}{t_k}\wV(t_k)-\frac{4}{t}V(t)\rb\df t}_{\text{\ding{172}}}\\
            &\underbrace{-\lb\wV(t)-V(t)\rb^\tr\lb\frac{4}{t_k^{\frac{3}{2}}}\nabla f(\wX(t_k))-\frac{4}{t^{\frac{3}{2}}}\nabla f(X(t))\rb\df t}_{\text{\ding{173}}}\\
            &\underbrace{-\lb\wV(t)-V(t)\rb^\tr\cdot\frac{4}{t_k^{\frac{3}{2}}}\sqrt{\eta}\df W(t)}_{\text{\ding{174}}}.
        \end{split}
    \end{align}
    The first part, \ding{172}, equals
    \begin{align}
        \begin{split}\label{bound-ne-ding1}
            &-\lb\wV(t)-V(t)\rb^\tr\lb\frac{4}{t_k}\wV(t_k)-\frac{4}{t}\wV(t)+\frac{4}{t}\wV(t)-\frac{4}{t}V(t)\rb\df t\\
            =&-\lb\wV(t)-V(t)\rb^\tr\lb\frac{4}{t_k}\wV(t_k)-\frac{4}{t}\wV(t)\rb\df t-\lb\wV(t)-V(t)\rb^\tr\lb\frac{4}{t}\wV(t)-\frac{4}{t}V(t)\rb\df t\\
            \leq&\bigl\|\wV(t)-V(t)\bigr\|^2\df t+\bigl\|\frac{2}{t_k}\wV(t_k)-\frac{2}{t}\wV(t)\bigr\|^2\df t+\bigl\|\wV(t)-V(t)\bigr\|^2\df t+\frac{4}{T_0^2}\bigl\|\wV(t)-V(t)\bigr\|^2\df t.
        \end{split}
    \end{align}
    We note that
    \begin{align*}
        \lnorm\frac{2}{t_k}\wV(t_k)-\frac{2}{t}\wV(t)\rnorm^2\df t=&\lnorm\frac{2}{t_k}\wV(t_k)-\frac{2}{t}\wV(t_k)+\frac{2}{t}\wV(t_k)-\frac{2}{t}\wV(t)\rnorm^2\df t\\
        \leq&2\lb\frac{2\eta}{T_0^2}\rb^2\lnorm\wV(t_k)\rnorm^2\df t+\frac{4}{T_0^2}\lnorm\wV(t_k)-\wV(t)\rnorm^2\df t.
    \end{align*}
    Lemma \ref{lem:boundwY} gives that $\E\Bigl[\bigl\|\wV(t_k)\bigr\|^2\Bigr]\leq C_1$ and $\E\Bigl[\bigl\|\wV(t_k)-\wV(t)\bigr\|^2\Bigr]\leq C_2\eta^2$. So, by taking expectation on both sides of the above inequality, we further have
    \begin{align}\label{eq:bound-increase}
        \E\lbm\lnorm\frac{2}{t_k}\wV(t_k)-\frac{2}{t}\wV(t)\rnorm^2\rbm\df t\leq\lb\frac{8C_1}{T_0^4}+\frac{4C_2}{T_0^2}\rb\eta^2\df t.
    \end{align}
    Therefore, we denote $C_4=\frac{8C_1}{T_0^4}+\frac{4C_2}{T_0^2}$, and (\ref{bound-ne-ding1}) gives
    \begin{align}\label{eq:bound-e-ding1}
        \E[\text{\ding{172}}]\leq\lb2+\frac{4}{T_0^2}\rb\E\Bigl[\bigl\|\wY(t)-Y(t)\bigr\|^2\Bigr]\df t+C_4\eta^2\df t.
    \end{align}
    For \ding{173}, we have
    \begin{align}
        \begin{split}\label{bound-ne-ding2}
            \text{\ding{173}}=&-\lb\wV(t)-V(t)\rb^\tr\lb
            \frac{4}{t_k^{\frac{3}{2}}}\nabla f(\wX(t_k))
            -\frac{4}{t^{\frac{3}{2}}}\nabla f(\wX(t))
            \rb\df t\\
            &-\lb\wV(t)-V(t)\rb^\tr\lb
            \frac{4}{t^{\frac{3}{2}}}\nabla f(\wX(t))
            -\frac{4}{t^{\frac{3}{2}}}\nabla f(X(t))\rb\df t\\
            \leq&
            \lnorm\wV(t)-V(t)\rnorm^2\df t+
            \lnorm\frac{2}{t_k^{\frac{3}{2}}}\nabla f(\wX(t_k))
            -\frac{2}{t^{\frac{3}{2}}}\nabla f(\wX(t))\rnorm^2\df t\\
            &+
            \lnorm\wV(t)-V(t)\rnorm^2\df t+
            \frac{4L^2}{T_0^3}\lnorm\wX(t)
            -X(t)\rnorm^2\df t.
        \end{split}
    \end{align}
    Analogous to (\ref{eq:bound-increase}) and (\ref{eq:bound-nabla}), there exists constant $C_5$ such that
    \begin{align*}
        \E\lbm\lnorm\frac{2}{t_k^{\frac{3}{2}}}\nabla f(\wX(t_k))
            -\frac{2}{t^{\frac{3}{2}}}\nabla f(\wX(t))\rnorm^2\rbm\df t\leq C_5\eta^2\df t.
    \end{align*}
    Therefore, (\ref{bound-ne-ding2}) gives
    \begin{align}\label{eq:bound-e-ding2}
        \E[\text{\ding{173}}]\leq\lb2+\frac{4L}{T_0^3}\rb\E\Bigl[\bigl\|\wY(t)-Y(t)\bigr\|^2\Bigr]\df t+C_5\eta^2\df t.
    \end{align}
    Additionally, by Lemma \ref{lem:boundwY} we show \text{\ding{174}} is a true martingale, and thus we derive
    \begin{equation}\label{eq:bound-e-ding3}
        \E[\text{\ding{174}}]=0.
    \end{equation}
    As a consequence, taking expectation on both sides of (\ref{eq:wv-v2-ito}) and substituting (\ref{bound-wv-v2-1}), (\ref{bound-wv-v2-2}), (\ref{eq:bound-e-ding1}), (\ref{eq:bound-e-ding2}), and (\ref{eq:bound-e-ding3}) give
    \begin{align}\label{eq:bound-wv-v2}
        \df\E\Bigl[\bigl\|\wV(t)-V(t)\bigr\|^2\Bigr]\leq\lb4+\frac{4}{T_0^2}+\frac{4L}{T_0^3}\rb\E\Bigl[\bigl\|\wY(t)-Y(t)\bigr\|^2\Bigr]\df t+\lb C_4\eta^2+C_5\eta^2+\frac{4n}{T_0^3}\eta\rb\df t.
    \end{align}
    
    Substituting (\ref{eq:bound-wx-x2}) and (\ref{eq:bound-wv-v2}) to (\ref{eq:decompose}) yields that
    \begin{align*}
        \df\E\Bigl[\bigl\|\wY(t)-Y(t)\bigr\|^2\Bigr]\leq&\lb7+\frac{4}{T_0^2}+\frac{4L}{T_0^3}\rb\E\Bigl[\bigl\|\wY(t)-Y(t)\bigr\|^2\Bigr]\df t\\
        &+\lb C_2\eta^2+C_4\eta^2+C_5\eta^2+\frac{4n}{T_0^3}\eta\rb\df t.
    \end{align*}
    Without loss of generality, we let $\eta<1$. Thus, there exist constants $C_6$ and $C_7$ such that
    \begin{equation*}
        \df\E\Bigl[\bigl\|\wY(t)-Y(t)\bigr\|^2\Bigr]\leq C_6\E\Bigl[\bigl\|\wY(t)-Y(t)\bigr\|^2\Bigr]\df t+C_7\eta\df t.
    \end{equation*}
    The Gr\"{o}nwall's inequality gives
    \begin{equation*}
        \E\Bigl[\bigl\|\wY(T)-Y(T)\bigr\|^2\Bigr]-\E\Bigl[\bigl\|\wY(T_0)-Y(T_0)\bigr\|^2\Bigr]\leq\frac{C_7}{C_6}\lb e^{C_6(T-T_0)}-1\rb\eta.
    \end{equation*}
    Since $\wY(T_0)=Y(T_0)$ and $\E\Bigl[\|x_{T/\eta}-X(T)\|^2\Bigr]=\E\Bigl[\bigl\|\wX(T)-X(T)\bigr\|^2\Bigr]\leq\E\Bigl[\bigl\|\wY(T)-Y(T)\bigr\|^2\Bigr]$, we complete the proof.
\end{proof}

% The following theorem formally shows that ODE (\ref{eq:SGDM-ODE-intro}) is the limit of SGDM (\ref{eq:SGDM-intro}) as $\eta$ goes to zero.

% \begin{theorem}\label{thm:sgdmtoode}
%     Let $f$ be $L$-smooth, $\{x_k\}$ be the sequence generated by SGDM (\ref{eq:SGDM-intro}), and $X(t)$ be the solution of ODE (\ref{eq:odesys}). If $X(T_0)=x_{T_0/\eta}$ and $V(T_0)=v_{T_0/\eta}$, then
%     \begin{align*}
%         \lim_{\eta\to0}\E[\|x_{T/\eta}-X(T)\|^2]=0
%     \end{align*}
%     holds for any fixed $T>T_0$.
% \end{theorem}

% The proof of Theorem \ref{thm:sgdmtoode} is deferred to Appendix \ref{app:sgdmtoode}.
\subsection{Convergence Results of the ODE}

Here we consider a generalized version of ODE (\ref{eq:SGDM-ODE-intro})
\begin{equation}\label{ode:alpha}
    \ddot{X}(t)+\frac{p+1}{t}\dot{X}(t)+\frac{p+1}{t^\alpha}\nabla f(X(t))=0,
\end{equation}
where $\alpha$ and $p$ are parameters.
% where $\alpha$ models the speed of the gradient-decaying. 
% Here and henceforth, we refer to $\alpha$ as the gradient-decaying parameter. 
Parameter $\alpha$ models the speed of the gradient-decaying. When setting $\alpha=\frac{3}{2}$ and $p=1$, we recover ODE (\ref{eq:SGDM-ODE-intro}).

To describe the convergence of $X(t)$, we define the Lyapunov function
\begin{equation}\label{energy:alpha}
    \ene(t)=\lnorm pX+t\dot{X}-px^*\rnorm^2+2(p+1)t^{2-\alpha}(f(X)-f^*).
\end{equation}
The following lemma shows that $\ene(t)$ decreases monotonically with respect to $t$.
\begin{lemma}\label{lemma:energy-alpha}
    Let Assumption \ref{assump:convex-smooth} hold, and $X$ be the solution to (\ref{ode:alpha}) with $\alpha\leq2$ and $p+\alpha\geq2$. Then the Lyapunov function (\ref{energy:alpha}) satisfies
    \begin{equation*}
        \frac{\df\ene(t)}{\df t}\leq0.
    \end{equation*}
\end{lemma}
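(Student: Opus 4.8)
The plan is to differentiate $\ene(t)$ directly and show the derivative is nonpositive, using only convexity of $f$. The whole argument hinges on one clean simplification. First I would write $Z(t) = pX + t\dot{X} - px^*$ for the vector inside the first norm, so that $\ene(t)=\lnorm Z(t)\rnorm^2 + 2(p+1)t^{2-\alpha}(f(X)-f^*)$. Differentiating gives $\dot{Z} = (p+1)\dot{X} + t\ddot{X}$, and substituting the ODE (\ref{ode:alpha}) in the form $t\ddot{X} = -(p+1)\dot{X} - (p+1)t^{1-\alpha}\nabla f(X)$ makes the two $(p+1)\dot{X}$ contributions cancel, leaving the identity $\dot{Z} = -(p+1)t^{1-\alpha}\nabla f(X)$. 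This is the step that drives everything: the velocity drops out of $\dot Z$ entirely.

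Next I would differentiate the two pieces of $\ene$ separately. For the first piece, $\frac{\df}{\df t}\lnorm Z\rnorm^2 = 2Z^\tr\dot{Z} = -2(p+1)t^{1-\alpha}\lb pX + t\dot{X} - px^*\rb^\tr\nabla f(X)$, which I split into $-2p(p+1)t^{1-\alpha}(X-x^*)^\tr\nabla f(X)$ and $-2(p+1)t^{2-\alpha}\dot{X}^\tr\nabla f(X)$. For the second piece the product rule gives $2(p+1)(2-\alpha)t^{1-\alpha}(f(X)-f^*) + 2(p+1)t^{2-\alpha}\nabla f(X)^\tr\dot{X}$. The main observation is that the $t^{2-\alpha}\dot{X}^\tr\nabla f(X)$ term from the first piece exactly cancels the corresponding term from the second — this is precisely why the coefficient $2(p+1)$ in front of $t^{2-\alpha}(f-f^*)$ is chosen as it is. After cancellation,
\[
\frac{\df\ene}{\df t} = -2p(p+1)t^{1-\alpha}(X-x^*)^\tr\nabla f(X) + 2(p+1)(2-\alpha)t^{1-\alpha}(f(X)-f^*).
\]

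To finish, I would invoke convexity in the form $(X-x^*)^\tr\nabla f(X) \ge f(X)-f(x^*) = f(X)-f^*$, so the first term is at most $-2p(p+1)t^{1-\alpha}(f(X)-f^*)$. Factoring out the nonnegative quantity $2(p+1)t^{1-\alpha}(f(X)-f^*)$ (nonnegative since $f(X)\ge f^*$ and $t>0$) leaves the scalar $2-\alpha-p$, which is $\le 0$ exactly under the hypothesis $p+\alpha\ge 2$; hence $\frac{\df\ene}{\df t}\le 0$.

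The computation is essentially routine, so the only real subtlety — and the step I would be most careful about — is getting the grouping right so that the gradient--velocity cross terms cancel, and tracking the exponents $t^{1-\alpha}$ versus $t^{2-\alpha}$ together with the direction of the convexity inequality. I would note that $L$-smoothness is not actually needed for the monotonicity (plain convexity suffices), and the hypothesis $\alpha\le 2$ appears not to enter the sign of $\frac{\df\ene}{\df t}$ either; it will instead be used in the subsequent rate estimate (Theorem \ref{thm:ode-rate}).
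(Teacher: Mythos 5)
Your proof is correct and follows essentially the same route as the paper's: differentiate $\ene$, substitute the ODE so that the $\langle \dot{X},\nabla f(X)\rangle$ cross terms cancel, and conclude with the convexity inequality $\langle X-x^*,\nabla f(X)\rangle\geq f(X)-f^*$. The only difference is the final grouping: the paper bounds $(2-\alpha)(f(X)-f^*)-p\langle X-x^*,\nabla f(X)\rangle$ above by $(2-\alpha)\bigl(f(X)-f^*-\langle X-x^*,\nabla f(X)\rangle\bigr)$, while you factor out $(2-\alpha-p)(f(X)-f^*)$; these are equivalent uses of the same ingredients.

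One caveat on your closing remark. You are right that $L$-smoothness is not used in the monotonicity argument (the paper's proof also uses only convexity), but the claim that $\alpha\leq 2$ "does not enter the sign" is not correct. Your step that replaces $\langle X-x^*,\nabla f(X)\rangle$ by $f(X)-f^*$ after multiplying by $-2p(p+1)t^{1-\alpha}$ silently requires $p\geq 0$, and under the lemma's hypotheses this is exactly what $\alpha\leq 2$ provides, via $p\geq 2-\alpha\geq 0$. If one drops $\alpha\leq2$ and keeps only $p+\alpha\geq2$, then $p\in(-1,0)$ becomes admissible, the inequality direction flips, and the conclusion genuinely fails. For instance, in one dimension take $\alpha=3$, $p=-\tfrac12$, $x^*=0$, and the convex, $2$-smooth function $f(x)=\bigl((|x|-1)_+\bigr)^2$; then
\begin{equation*}
    \frac{\df\ene}{\df t}=\tfrac12\,t^{-2}\,X\nabla f(X)-t^{-2}\bigl(f(X)-f^*\bigr),
\end{equation*}
which at any point with $X(t)=2$ equals $t^{-2}>0$. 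So $\alpha\leq2$ is used in your argument too — just implicitly, through the sign of $p$.
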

\begin{proof}
    By definition (\ref{energy:alpha}),
    \begin{align}\label{eq:continuous-diff}
        \begin{split}
            \frac{\df\ene(t)}{\df t}=&2\left\langle pX+t\dX-px^*,(p+1)\dX+t\ddX\right\rangle
            +2(p+1)t^{2-\alpha}\left\langle\dX,\nabla f(X)\right\rangle\\
            &+(4-2\alpha)(p+1)t^{1-\alpha}(f(X)-f^*).
        \end{split}
    \end{align}
    Since by (\ref{ode:alpha}), $(p+1)\dX+t\ddX=-(p+1)t^{1-\alpha}\nabla f(X)$, the above equality leads to
    \begin{align*}
        \frac{\df\ene(t)}{\df t}
        % =&2\left\langle pX+t\dX-px^*,-(p+1)t^{1-\alpha}\nabla f(X)\right\rangle
        % +2(p+1)t^{2-\alpha}\left\langle\dX,\nabla f(X)\right\rangle\\
        % &+(4-2\alpha)(p+1)t^{1-\alpha}(f(X)-f^*)\\
        % &(4-2\alpha)(p+1)t^{1-\alpha}(f(X)-f^*)\\
        % &+2(p+1)t^{2-\alpha}\left\langle\dX,\nabla f(X)\right\rangle\nonumber\\
        % &+2\left\langle pX+t\dX-px^*,-(p+1)t^{1-\alpha}\nabla f(X)\right\rangle\nonumber\\
        =&-2p(p+1)t^{1-\alpha}\left\langle X-x^*, \nabla f(X)\right\rangle
        +(4-2\alpha)(p+1)t^{1-\alpha}(f(X)-f^*)\\
        % &\begin{aligned}
        %     =2(p+1)t^{1-\alpha}(&(2-\alpha)(f(X)-f^*)\\
        %     &-p\left\langle X-x^*, \nabla f(X)\right\rangle)
        % \end{aligned}
        =&2(p+1)t^{1-\alpha}\bigg((2-\alpha)(f(X)-f^*)
        -p\left\langle X-x^*, \nabla f(X)\right\rangle\bigg).
    \end{align*} 
    From convexity of $f$, we have $f^*\geq f(X)+\lang\nabla f(X),x^*-X\rang$. This inequality indicates that $\lang\nabla f(X),x^*-X\rang \le 0 $. Then the condition $\alpha\leq2$ and $p+\alpha\geq2$ yields that
    \begin{align*}
        (2-\alpha)(f(X)-f^*)-p\left\langle X-x^*, \nabla f(X)\right\rangle
        \leq(2-\alpha)(f(X)-f^*-\left\langle X-x^*, \nabla f(X)\right\rangle)\leq0,
    \end{align*}
    and thus $\frac{\df\ene(t)}{\df t}\leq0$.
\end{proof}
With Lemma \ref{lemma:energy-alpha} in place, we characterize the convergence rate of the ODE (\ref{ode:alpha}) in Theorem \ref{thm:ode-rate} below.
% In this paper, we use $x^*$ to denote any minimizer of $f$, and $f^*=f(x^*)$.

\begin{theorem}
    \label{thm:ode-rate}
    Let Assumption \ref{assump:convex-smooth} hold, and $X$ be the solution of (\ref{ode:alpha}) with $\alpha\leq2$ and $p+\alpha\geq2$. Then for any $T>T_0>0$,
    \begin{equation*}
        f(X(T))-f^*\leq\frac{\ene(T_0)}{2(p+1)T^{2-\alpha}}.
    \end{equation*}
\end{theorem}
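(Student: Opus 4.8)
The plan is to read the bound off directly from the monotonicity of the Lyapunov function established in Lemma \ref{lemma:energy-alpha}, so that this theorem becomes a short corollary rather than a fresh computation. First I would invoke that lemma: since $\frac{\df\ene(t)}{\df t}\le 0$ on $(T_0,T]$, the map $t\mapsto\ene(t)$ is non-increasing, and therefore $\ene(T)\le\ene(T_0)$ for every pair $T>T_0>0$.

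Next I would isolate the function-value term inside $\ene(T)$. By the definition \eqref{energy:alpha}, we have $\ene(T)=\lnorm pX(T)+T\dX(T)-px^*\rnorm^2+2(p+1)T^{2-\alpha}(f(X(T))-f^*)$. The first summand is a squared Euclidean norm and hence nonnegative, so dropping it produces the lower bound $\ene(T)\ge 2(p+1)T^{2-\alpha}(f(X(T))-f^*)$. Chaining this with the monotonicity inequality from the previous step gives $2(p+1)T^{2-\alpha}(f(X(T))-f^*)\le\ene(T_0)$.

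Finally I would divide through by the coefficient $2(p+1)T^{2-\alpha}$, which requires only that this coefficient be strictly positive in order to preserve the direction of the inequality. The hypotheses $\alpha\le 2$ and $p+\alpha\ge 2$ force $p\ge 2-\alpha\ge 0$, whence $p+1\ge 1>0$, while $T>0$ gives $T^{2-\alpha}>0$; the product is therefore positive. Dividing then yields exactly $f(X(T))-f^*\le\frac{\ene(T_0)}{2(p+1)T^{2-\alpha}}$, as claimed.

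I do not expect a genuine obstacle here: all of the analytic content — the cancellation that uses the ODE \eqref{ode:alpha} to substitute for $(p+1)\dX+t\ddX$, together with the convexity inequality $f^*\ge f(X)+\lang\nabla f(X),x^*-X\rang$ — was already discharged in Lemma \ref{lemma:energy-alpha}. The only point deserving (minor) care is confirming the sign of the coefficient from the parameter constraints, which I have isolated above; the remaining manipulations amount to discarding a nonnegative term and rearranging.
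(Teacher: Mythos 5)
Your proposal is correct and follows essentially the same route as the paper's own proof: invoke the monotonicity $\ene(T)\le\ene(T_0)$ from Lemma \ref{lemma:energy-alpha}, drop the nonnegative squared-norm term in the definition \eqref{energy:alpha} to get $2(p+1)T^{2-\alpha}(f(X(T))-f^*)\le\ene(T)$, and divide by the positive coefficient. Your explicit check that $p+1>0$ and $T^{2-\alpha}>0$ under the stated parameter constraints is a minor addition the paper leaves implicit, but the argument is identical in substance.
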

\begin{proof}
    Lemma \ref{lemma:energy-alpha} implies that $\ene(T)\leq\ene(T_0)$ for $T \ge T_0$. Therefore, we have
    \begin{equation*}
        f(X(T))-f^*\leq\frac{\ene(T)}{2(p+1)T^{2-\alpha}}\leq\frac{\ene(T_0)}{2(p+1)T^{2-\alpha}},
    \end{equation*}
    and the proof is completed.
\end{proof}

\section{Convergence Results of SGDM}\label{sec: convergence-in-expectation-SGDM}

In this section, we carry our continuous-time constructions over to the analysis of the discrete-time case. In this connection, we consider the following SGDM method
\begin{align}\label{eq:SGDM}
    x_{k+1}=x_k+\frac{k}{k+2}(x_k-x_{k-1})-\frac{2\sqrt{\eta_k}}{(k+2)\sqrt{k}}g(x_k,\xi_k).
\end{align}
The scheme (\ref{eq:SGDM}) is obtained from (\ref{eq:SGDM-intro}) by subscribing the first equality to the second equality and replacing $\eta$ by $\eta_k$. The choice of stepsize $\eta_k$ is presented below that does not rely on knowledge of the total number of iterates or the noise variance. 

% {\color{red}
We use $\mathcal{F}_k$ to denote the $\sigma$-algebra generated by all randomness after arriving at $x_k$, that is, $\mathcal{F}_k=\sigma(\xi_1,\cdots,\xi_{k-1})$. We introduce the following assumption for the discrete-time convergence analysis.

% \begin{assumption}\label{assump:convex-smooth}
%     $f$ is convex and $L$-smooth, that is, $\|\nabla f(x)-\nabla f(y)\|\leq L\|x-y\|$ for any $x,y\in\R^n$.
% \end{assumption}
% In the stochastic optimization setting, we assume that we only have access to noisy gradients of $f$ through a first-order oracle. Specifically, when we call the oracle with input $x_k$, it outputs a stochastic gradient $g(x_k,\xi_k)$. The following assumption are made for the stochastic gradients.
\begin{assumption}\label{assump:sto-gradient}
    For any $k\geq1$, the stochastic gradient $g(x_k,\xi_k)$ satisfies $\E[g(x_k, \xi_k)|\mathcal{F}_k]=\nabla f(x_k)$ and $\E[\|g(x_k,\xi_k)\|^2|\mathcal{F}_k]\leq\|\nabla f(x_k)\|^2+\sigma^2$.
    % The stochastic gradient $g(x,\xi)$ satisfies $\E[g(x, \xi)]=\nabla f(x)$ and $\E\|g(x,\xi)\|^2\leq\|\nabla f(x)\|^2+\sigma^2$ for any $x$. In the SGDM run, the randomness of different stochastic gradient oracle calls $\{\xi_k\}$ are independent.
\end{assumption}
% }
% Furthermore, we use $\mathcal{F}_k$ to denote the $\sigma$-algebra generated by all randomness after arriving at $x_k$, that is, $\mathcal{F}_k=\sigma(\xi_1,\cdots,\xi_{k-1})$. Since $\{\xi_k\}_{k\geq1}$ are independent, this definition yields that $\xi_{k'}$ is independent of $\mathcal{F}_k$, for any $k'\geq k$.

% \begin{align}\label{phase-space:alpha}
%     \begin{split}
%         &x_k-x_{k-1}=\eta v_{k-1},\\
%         &v_k-v_{k-1}=-\frac{p+1}{k}v_k-\frac{p+1}{k}(k\eta)^{1-\alpha}\nabla f(x_{k}).
%     \end{split}
% \end{align}

% \begin{align}\label{discrete:alpha}
%     x_{k+1}=x_k+\frac{k}{k+p+1}(x_k-x_{k-1})-\frac{p+1}{k+p+1}(k\eta_k)^{1-\alpha}\eta_k g(x_k,\xi_k),
% \end{align}

% \begin{algorithm}[htb]
% 	\caption{Stochastic Gradient-decaying Method} 
% 	\label{alg:sto-grad-momentum} 
% 	\begin{algorithmic}[1]  
% 		\STATE \textbf{Input.} Initial point $x_0$, $x_1$; stepsize sequence $\{\eta_k\}$.
		
% 		\FOR{$k=1,2,\cdots$}
% 			\STATE Call the first-order oracle and receive the stochastic gradient $g(x_k, \xi_k)$.
% 			\STATE $x_{k+1}=x_k+\frac{k}{k+p+1}(x_k-x_{k-1})-\frac{p+1}{k+p+1}(k\eta_k)^{1-\alpha}\eta_k g(x_k,\xi_k)$.
% 		\ENDFOR 
% 	\end{algorithmic} 
% \end{algorithm}

\subsection{Discrete-Time Lyapunov Function}

Now we show how to translate the Lyapunov argument in Section \ref{sec:continuous} to the discrete-time case. Firstly, we define the discrete-time Lyapunov function
\begin{align}\label{discrete-energy:alpha}
        \ene(k)=\lnorm x_{k+1}+(k+1)(x_{k+1}-x_k)-x^*\rnorm^2
        +4\sqrt{(k+1)\eta_k}(f(x_{k})-f^*),
\end{align}
which is obtained from (\ref{energy:alpha}) by replacing $t$ with $(k+1)\eta_k$, and $\dot{X}$ with $v_{k+1}=\frac{x_{k+1}-x_k}{\eta_{k}}$. The following result shows the descent property of the Lyapunov function $\ene(k)$, and the proof of Lemma \ref{lem:explicit-decay-difference-alpha} is translated from that of Lemma \ref{lemma:energy-alpha} in the same way as the construction of $\ene(k)$.

\begin{lemma}\label{lem:explicit-decay-difference-alpha}
    Let Assumption \ref{assump:convex-smooth} hold. If the stepsize sequence $\{\eta_k\}$ is monotonically decreasing, then $\{x_k\}$ generated by SGDM (\ref{eq:SGDM}) satisfies
    \begin{align}
        \begin{split}\label{ene-decay-expectation-alpha}
            \ene(k)-\ene(k-1)
            \leq&\frac{4\eta_k}{k}\lnorm g(x_{k},\xi_{k})\rnorm^2-\frac{2}{L}\sqrt{\frac{\eta_k}{k}}\|\nabla f(x_{k})\|^2-2\sqrt{\frac{\eta_k}{k}}(f(x_{k})-f^*)\\
            &+4\sqrt{\frac{\eta_k}{k}}\langle\nabla f(x_{k})-g(x_{k},\xi_{k}),\tau_{k}\rangle
        \end{split}
    \end{align}
    for any $k\geq1$, where $\tau_k=k(x_{k}-x_{k-1})+(x_{k}-x^*)$.
\end{lemma}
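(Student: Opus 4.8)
The plan is to mirror the continuous-time computation of Lemma \ref{lemma:energy-alpha}, replacing differentiation in $t$ by the forward difference $\ene(k)-\ene(k-1)$ and the chain-rule identity $(p+1)\dot X+t\ddot X=-(p+1)t^{1-\alpha}\nabla f(X)$ by an exact discrete counterpart. First I would name the vector inside the quadratic part, $d_k:=x_{k+1}+(k+1)(x_{k+1}-x_k)-x^*=(k+2)x_{k+1}-(k+1)x_k-x^*$, and record two facts. Multiplying the SGDM recursion (\ref{eq:SGDM}) by $(k+2)$ and substituting gives the clean identity $d_k=d_{k-1}-\frac{2\sqrt{\eta_k}}{\sqrt k}g(x_k,\xi_k)$, the discrete analog of the ODE identity; moreover $d_{k-1}=x_k+k(x_k-x_{k-1})-x^*=\tau_k$. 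Expanding the square then yields
\[
\|d_k\|^2-\|d_{k-1}\|^2=-\frac{4\sqrt{\eta_k}}{\sqrt k}\langle\tau_k,g(x_k,\xi_k)\rangle+\frac{4\eta_k}{k}\|g(x_k,\xi_k)\|^2,
\]
so the noisy square already supplies the $\frac{4\eta_k}{k}\|g(x_k,\xi_k)\|^2$ term of the claim with no further work.

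Next I would split $g=\nabla f(x_k)+(g-\nabla f(x_k))$ in the inner product. The noise part contributes $4\sqrt{\eta_k/k}\langle\nabla f(x_k)-g(x_k,\xi_k),\tau_k\rangle$, which is exactly the last term of the lemma (and is the term later exploited as a martingale increment). For the deterministic part I would expand $\tau_k=k(x_k-x_{k-1})+(x_k-x^*)$, separating a ``momentum'' piece $-4\sqrt{k\eta_k}\langle x_k-x_{k-1},\nabla f(x_k)\rangle$ from a ``gradient'' piece $-\frac{4\sqrt{\eta_k}}{\sqrt k}\langle x_k-x^*,\nabla f(x_k)\rangle$. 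For the gradient piece I would avoid plain convexity and instead use the stronger smooth-convex inequality $\langle\nabla f(x_k),x_k-x^*\rangle\ge f(x_k)-f^*+\frac{1}{2L}\|\nabla f(x_k)\|^2$ (applied with $\nabla f(x^*)=0$, valid under Assumption \ref{assump:convex-smooth}); this single step produces both the $-\frac{2}{L}\sqrt{\eta_k/k}\|\nabla f(x_k)\|^2$ term and a $-4\sqrt{\eta_k/k}(f(x_k)-f^*)$ contribution.

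It then remains to combine the momentum piece with the difference of the function-value parts of $\ene$. Plain convexity, $\langle\nabla f(x_k),x_k-x_{k-1}\rangle\ge f(x_k)-f(x_{k-1})$, turns the momentum piece into $-4\sqrt{k\eta_k}(f(x_k)-f(x_{k-1}))$, which I would add to $4\sqrt{(k+1)\eta_k}(f(x_k)-f^*)-4\sqrt{k\eta_{k-1}}(f(x_{k-1})-f^*)$. The hard part is the bookkeeping of the three distinct prefactors $\sqrt{(k+1)\eta_k}$, $\sqrt{k\eta_k}$ and $\sqrt{k\eta_{k-1}}$: writing $a=f(x_k)-f^*\ge0$ and $b=f(x_{k-1})-f^*\ge0$, the cross-terms collapse to $4a\sqrt{\eta_k}(\sqrt{k+1}-\sqrt k)+4b\sqrt k(\sqrt{\eta_k}-\sqrt{\eta_{k-1}})$. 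Here I would bound the first summand by $2\sqrt{\eta_k/k}\,a$ via $\sqrt{k+1}-\sqrt k\le\frac{1}{2\sqrt k}$, and discard the second as nonpositive using the monotonicity $\eta_k\le\eta_{k-1}$ together with $b\ge0$. Combining the resulting $2\sqrt{\eta_k/k}(f(x_k)-f^*)$ with the $-4\sqrt{\eta_k/k}(f(x_k)-f^*)$ from the previous paragraph nets exactly the $-2\sqrt{\eta_k/k}(f(x_k)-f^*)$ coefficient in the statement. I expect this coefficient reconciliation — not the algebra of the squared norm — to be the only delicate point, precisely because it is where the decreasing-stepsize hypothesis is consumed.
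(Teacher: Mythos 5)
Your proposal is correct and follows essentially the same route as the paper's proof: difference the Lyapunov function, use the recursion identity $d_k-d_{k-1}=-2\sqrt{\eta_k/k}\,g(x_k,\xi_k)$ (the paper's (\ref{discrete-alpha})), apply the smooth-convex inequality to $\langle\nabla f(x_k),x_k-x^*\rangle$ and plain convexity to the momentum term, and consume the monotonicity of $\{\eta_k\}$ together with $\sqrt{k+1}-\sqrt{k}\le\frac{1}{2\sqrt{k}}$ in exactly the same prefactor bookkeeping. The only deviation is cosmetic: you expand the difference of squares around $d_{k-1}=\tau_k$, which produces the $+\frac{4\eta_k}{k}\|g(x_k,\xi_k)\|^2$ term and the inner product with $\tau_k$ in one step, whereas the paper expands around $d_k$ and substitutes the recursion a second time to arrive at the same expression.
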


We note that the first term $\frac{4\eta_k}{k}\lnorm g(x_{k},\xi_{k})\rnorm^2$ on the RHS of (\ref{ene-decay-expectation-alpha}) is a higher-order term with respect to $\eta_k$ than the second term $\frac{2}{L}\sqrt{\frac{\eta_k}{k}}\|\nabla f(x_{k})\|^2$; so this positive term can be controlled when $\eta_k$ is small. Next, the last term $4\sqrt{\frac{\eta_k}{k}}\langle\nabla f(x_{k})-g(x_{k},\xi_{k}),\tau_{k}\rangle$ has expectation equal to zero. 
% Therefore, Lemma \ref{lem:explicit-decay-difference-alpha} can be regarded as a discrete-time counterpart of Lemma \ref{lemma:energy-alpha}.
Hence, $\ene(k)$ trends downward with respect to $k$, making Lemma \ref{lem:explicit-decay-difference-alpha} the discrete-time counterpart of Lemma \ref{lemma:energy-alpha}.

\begin{proof}[Proof of Lemma \ref{lem:explicit-decay-difference-alpha}]
    By differnencing the Lyapunov function (\ref{discrete-energy:alpha}), we get
    \begin{align}\label{eq:discrete-diff}
        \begin{split}
            &\ene(k)-\ene(k-1)\\
        \leq&\lnorm x_{k+1}+(k+1)(x_{k+1}-x_{k})-x^*\rnorm^2+4\sqrt{(k+1)\eta_{k}}(f(x_{k})-f^*)\\
        &-\lnorm x_{k}+k(x_{k}-x_{k-1})-x^*\rnorm^2-4\sqrt{k\eta_{k}}(f(x_{k-1})-f^*)\\
        \leq&2\langle 2(x_{k+1}-x_{k})+k(x_{k+1}-2x_{k}+x_{k-1}),
        x_{k+1}+(k+1)(x_{k+1}-x_{k})-x^*\rangle\\
        &-\lnorm 2(x_{k+1}-x_{k})+k(x_{k+1}-2x_{k}+x_{k-1})\rnorm^2+4\sqrt{k\eta_{k}}(f(x_{k})-f(x_{k-1}))\\
        &+2\sqrt{\frac{\eta_k}{k}}(f(x_{k})-f^*),
        \end{split}
    \end{align}
    where the first inequality follows from $\eta_{k}\leq\eta_{k-1}$, and the second inequality follows from $\|a\|^2-\|b\|^2=2\langle a-b,a\rangle-\|a-b\|^2$ and $\sqrt{k+1}-\sqrt{k}\leq\frac{1}{2\sqrt{k}}$. From (\ref{eq:SGDM}) we have
    \begin{align}\label{discrete-alpha}
        \begin{split}
            &2(x_{k+1}-x_{k})+k(x_{k+1}-2x_{k}+x_{k-1})\\
            =&(k+2)(x_{k+1}-x_{k})-k(x_{k}-x_{k-1})
        =-2\sqrt{\frac{\eta_k}{k}}g(x_{k}, \xi_{k}),
        \end{split}
    \end{align}
    and thus
    \begin{align}\label{explicit-decay-mid1}
        \begin{split}
            &\ene(k)-\ene(k-1)\\
            \leq&-4\left\langle\sqrt{\frac{\eta_k}{k}}g(x_{k}, \xi_{k}), x_{k}+(k+2)(x_{k+1}-x_{k})-x^*\right\rangle-\lnorm2\sqrt{\frac{\eta_k}{k}}g(x_{k}, \xi_{k})\rnorm^2\\
            &+4\sqrt{k\eta_{k}}(f(x_{k})-f(x_{k-1}))+2\sqrt{\frac{\eta_k}{k}}(f(x_{k})-f^*)\\
            =&-4\left\langle\sqrt{\frac{\eta_k}{k}} g(x_{k},\xi_{k}),(k+2)(x_{k+1}-x_{k})\right\rangle-\lnorm2\sqrt{\frac{\eta_k}{k}} g(x_{k},\xi_{k})\rnorm^2\\
            &+4\sqrt{k\eta_k}(f(x_{k})-f(x_{k-1}))+4\sqrt{\frac{\eta_k}{k}}(f(x_{k})-f^*-\langle g(x_{k},\xi_{k}), x_{k}-x^*\rangle)\\
            &-2\sqrt{\frac{\eta_k}{k}}(f(x_{k})-f^*).
        \end{split}
    \end{align}
    Intuitively, (\ref{eq:discrete-diff}) can be seen as the discrete-time counterpart of (\ref{eq:continuous-diff}), and (\ref{discrete-alpha}) corresponds to the ODE (\ref{ode:alpha}). Then we note that the convexity and $L$-smoothness of $f$ gives
    \begin{equation}\label{explicit-decay-convex1}
        f^*-f(x_{k})-\langle\nabla f(x_{k}), x^*-x_{k}\rangle\geq\frac{1}{2L}\|\nabla f(x_k)\|^2,
    \end{equation}
    and the convexity of $f$ shows that 
    \begin{equation}\label{explicit-decay-convex2}
        f(x_{k})-f(x_{k-1})\leq\langle\nabla f(x_{k}), x_{k}-x_{k-1}\rangle.
    \end{equation}
    Plugging (\ref{explicit-decay-convex1}) and (\ref{explicit-decay-convex2}) into (\ref{explicit-decay-mid1}), we have
    \begin{align*}
        &\ene(k)-\ene(k-1)\\
        \leq&4\sqrt{\frac{\eta_k}{k}}\langle g(x_k,\xi_k),-(k+2)x_{k+1}+(2k+2)x_k-kx_{k-1}\rangle\\
        &+4\sqrt{\frac{\eta_k}{k}}\langle\nabla f(x_{k})-g(x_{k},\xi_{k}),k(x_k-x_{k-1})\rangle\\
        &-\lnorm2\sqrt{\frac{\eta_k}{k}} g(x_{k},\xi_{k})\rnorm^2-\frac{2}{L}\sqrt{\frac{\eta_k}{k}}\|\nabla f(x_{k})\|^2
        +4\sqrt{\frac{\eta_k}{k}}\langle\nabla f(x_{k})-g(x_{k},\xi_{k}),x_{k}-x^*\rangle\\
        &-2\sqrt{\frac{\eta_k}{k}}(f(x_{k})-f^*).\nonumber
    \end{align*}
    Finally, we substitute (\ref{discrete-alpha}) to the RHS of the above inequality, and reorder the terms to conclude
    \begin{align*}
        \ene(k)-\ene(k-1) 
        % \leq&4\sqrt{\frac{\eta_k}{k}}\left\langle g(x_{k},\xi_{k}), 2\sqrt{\frac{\eta_k}{k}} g(x_{k},\xi_{k})\right\rangle\\
        % &-\lnorm2\sqrt{\frac{\eta_k}{k}} g(x_{k},\xi_{k})\rnorm^2\\
        % &-\frac{2}{L}\sqrt{\frac{\eta_k}{k}}\|\nabla f(x_{k})\|^2-2\gamma_k(f(x_{k})-f^*)\\
        % &+4\gamma_k\langle\nabla f(x_{k})-g(x_{k},\xi_{k}),k(x_k-x_{k-1})\rangle\\
        % &+4\gamma_k\langle\nabla f(x_{k})-g(x_{k},\xi_{k}),x_{k}-x^*\rangle\\
        \leq&\frac{4\eta_k}{k}\lnorm g(x_{k},\xi_{k})\rnorm^2-\frac{2}{L}\sqrt{\frac{\eta_k}{k}}\|\nabla f(x_{k})\|^2-2\sqrt{\frac{\eta_k}{k}}(f(x_{k})-f^*)\\
        &+4\sqrt{\frac{\eta_k}{k}}\langle\nabla f(x_{k})-g(x_{k},\xi_{k}), \tau_k\rangle,
    \end{align*}
    where $\tau_k=k(x_{k}-x_{k-1})+p(x_{k}-x^*)$.
\end{proof}

\subsection{Convergence in Expectation of SGDM}
% We note that the first term $\frac{4\eta_k}{k}\lnorm g(x_{k},\xi_{k})\rnorm^2$ in the RHS of (\ref{ene-decay-expectation-alpha}) is a higher-order term with respect to $\eta_k$ than the second term $\frac{2}{L}\sqrt{\frac{\eta_k}{k}}\|\nabla f(x_{k})\|^2$, so it can be controlled when $\eta_k$ is small. Besides, the last term $4\sqrt{\frac{\eta_k}{k}}\langle\nabla f(x_{k})-g(x_{k},\xi_{k}),\tau_{k}\rangle$ equals to zero in expectation. Therefore, Lemma \ref{lem:explicit-decay-difference-alpha} can be seen as a discrete-time counterpart of Lemma \ref{lemma:energy-alpha}
With Lemma \ref{lem:explicit-decay-difference-alpha} in place, we present the convergence result in expectation of SGDM. Furthermore, in the following analysis we show that there exists a subsequence of $\{x_k\}$ converges at rate $o\left(\frac{1}{\sqrt{k}\log\log k}\right)$, which is \emph{faster than the lower bound} $\Omega\left(\frac{1}{\sqrt{k}}\right)$ of the whole sequence \citep{agarwal2012information}.
\begin{theorem}
    \label{thm:algo-rate}
    Let Assumptions \ref{assump:convex-smooth} and \ref{assump:sto-gradient} hold. Then $\{x_k\}$ generated by SGDM (\ref{eq:SGDM}) with stepsize $\eta_k=\frac{1}{L^2\log^2(k+2)}$ and initialization $x_0=x_1$ satisfies
    \begin{align*}
        \E[f(x_k)]-f^*\leq\frac{\left(3L\|x_0-x^*\|^2+\frac{4\sigma^2}{L}\right)\log(k+2)}{2\sqrt{k+1}},
    \end{align*}
    and
    \begin{align}\label{discrete-result-exp2}
        \liminf_{k\to\infty}\bigg(\sqrt{k}\log\log(k+2)\bigg)\bigg(\E[f(x_k)]-f^*\bigg)=0.
    \end{align}
\end{theorem}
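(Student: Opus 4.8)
The plan is to turn the one-step descent estimate of Lemma~\ref{lem:explicit-decay-difference-alpha} into a telescoping recursion for $\E[\ene(k)]$. Conditioning on $\mathcal{F}_j$, note that $\tau_j$ is $\mathcal{F}_j$-measurable and $\E[g(x_j,\xi_j)\mid\mathcal{F}_j]=\nabla f(x_j)$ by Assumption~\ref{assump:sto-gradient}, so the inner-product term has zero conditional mean; combined with the second-moment bound $\E[\|g(x_j,\xi_j)\|^2\mid\mathcal{F}_j]\le\|\nabla f(x_j)\|^2+\sigma^2$, Lemma~\ref{lem:explicit-decay-difference-alpha} yields
\begin{equation*}
\E[\ene(j)] - \E[\ene(j-1)] \le \frac{4\eta_j}{j}\sigma^2 + \Big(\frac{4\eta_j}{j} - \frac{2}{L}\sqrt{\frac{\eta_j}{j}}\Big)\E[\|\nabla f(x_j)\|^2] - 2\sqrt{\frac{\eta_j}{j}}\big(\E[f(x_j)] - f^*\big).
\end{equation*}
The stepsize $\eta_j=1/(L^2\log^2(j+2))$ is tuned precisely so that $\sqrt{\eta_j/j}\le 1/(2L)$ once $j\log^2(j+2)\ge 4$ (i.e.\ for $j\ge 3$), which makes the coefficient of $\E[\|\nabla f(x_j)\|^2]$ nonpositive and lets me discard that term for those indices.

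For the in-expectation rate I would also drop the nonpositive term $-2\sqrt{\eta_j/j}(\E[f(x_j)]-f^*)$ and telescope, obtaining $\E[\ene(k)]\le\ene(0)+4\sigma^2\sum_{j=1}^k\eta_j/j$. Two elementary estimates close it out: the noise series is summable, $\sum_{j\ge1}\eta_j/j=L^{-2}\sum_{j\ge1}(j\log^2(j+2))^{-1}\le 2L^{-2}$ by integral comparison, and $\ene(0)\le 6\|x_0-x^*\|^2$ because $x_0=x_1$ makes the velocity part of the Lyapunov function vanish while $L$-smoothness gives $f(x_0)-f^*\le\frac L2\|x_0-x^*\|^2$. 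Since the squared-norm part of $\ene(k)$ is nonnegative, $\ene(k)\ge 4\sqrt{(k+1)\eta_k}(f(x_k)-f^*)=\frac{4\sqrt{k+1}}{L\log(k+2)}(f(x_k)-f^*)$, and dividing the bound on $\E[\ene(k)]$ through by this factor reproduces the stated constant $3L\|x_0-x^*\|^2+4\sigma^2/L$.

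For the $\liminf$ claim I would instead retain, rather than discard, the function-value term. Telescoping and using $\ene(k)\ge0$ yields the summability
\begin{equation*}
\sum_{j=1}^{\infty} 2\sqrt{\frac{\eta_j}{j}}\,\big(\E[f(x_j)]-f^*\big) \le \ene(0) + 4\sigma^2\sum_{j=1}^{\infty}\frac{\eta_j}{j} < \infty,
\end{equation*}
that is, $\sum_j \frac{\E[f(x_j)]-f^*}{\sqrt j\,\log(j+2)}<\infty$. Arguing by contradiction: since $\E[f(x_j)]-f^*\ge0$ the $\liminf$ is nonnegative, so if it were strictly positive then $\E[f(x_j)]-f^*\ge\varepsilon/(\sqrt j\log\log(j+2))$ for all large $j$ and some $\varepsilon>0$, making each summand at least $\varepsilon/(j\log(j+2)\log\log(j+2))$. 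But this is the divergent Bertrand series — under $u=\log\log x$ its integral is $\log\log\log x\to\infty$ — contradicting summability. Hence the $\liminf$ equals zero.

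The delicate points are twofold. First, the gradient coefficient is genuinely (if only slightly) positive at $j=1,2$, where $j\log^2(j+2)<4$; I must verify that these two boundary contributions are $O(\|x_0-x^*\|^2)$ via $\|\nabla f(x_j)\|^2\le 2L(f(x_j)-f^*)$ and fold them into the slack between the true value of $\ene(0)$ and the crude bound $6\|x_0-x^*\|^2$, together with pinning down the convergent-series constant $\sum(j\log^2(j+2))^{-1}\le 2$. Second, and more conceptually, the entire $\liminf$ argument hinges on the $\log\log$ factor sitting exactly at the convergence/divergence threshold of $\sum 1/(j\log j\log\log j)$: recognizing this Bertrand series is the crux, and it is precisely what both delivers the $o(1/(\sqrt k\log\log k))$ subsequential rate and prevents it from being sharpened to any larger iterated-logarithm factor.
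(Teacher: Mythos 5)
Your proposal is correct and, at its core, follows the same route as the paper's own proof: take conditional expectations in Lemma \ref{lem:explicit-decay-difference-alpha} so the inner-product term vanishes and the noise contributes $4\eta_k\sigma^2/k$, kill the $\|\nabla f(x_k)\|^2$ term via the stepsize, telescope to bound $\E[\ene(k)]$, divide by $4\sqrt{(k+1)\eta_k}$, and for (\ref{discrete-result-exp2}) retain the function-value terms and contradict the divergence of the Bertrand series $\sum_k 1/\bigl(k\log(k+2)\log\log(k+2)\bigr)$.

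The one place you genuinely differ is the treatment of $k=1,2$, and it is worth recording: the paper's proof never patches these indices because it silently works with $\eta_k=\frac{1}{4L^2\log^2(k+2)}$, a factor of $4$ smaller than the stepsize announced in the theorem statement, for which $\frac{4\eta_k}{k}\le\frac{2}{L}\sqrt{\eta_k/k}$ holds for \emph{all} $k\ge1$; its constants ($\ene(0)\le3\|x_0-x^*\|^2$ and $\sum_i\frac{1}{i\log^2(i+2)}\le4$) then reproduce the stated bound. You instead keep the announced stepsize and correctly identify that the condition fails exactly at $k=1,2$; your constants ($\ene(0)\le(1+\frac{2}{\log2})\|x_0-x^*\|^2$ and series $\le2$) land on the same stated bound, so your version is arguably the more faithful one. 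Two touch-ups are needed, neither a real gap. First, "folding into the initialization slack" only works at $k=1$, where $x_1=x_0$ is deterministic and the positive term is at most $\bigl(\tfrac{4}{\log^2 3}-\tfrac{2}{\log 3}\bigr)\|x_0-x^*\|^2\le\tfrac{3}{2}\|x_0-x^*\|^2$; at $k=2$ the quantity $f(x_2)-f^*$ is random and not controlled by $\|x_0-x^*\|^2$, so there you must instead let the negative term $-2\sqrt{\eta_2/2}\,\bigl(\E[f(x_2)]-f^*\bigr)$ absorb the positive gradient contribution after applying $\|\nabla f(x_2)\|^2\le 2L\bigl(f(x_2)-f^*\bigr)$, which it does comfortably since $2L\bigl(\tfrac{4\eta_2}{2}-\tfrac{2}{L}\sqrt{\eta_2/2}\bigr)<2\sqrt{\eta_2/2}$. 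Second, your bound $\sum_j\bigl(j\log^2(j+2)\bigr)^{-1}\le2$ is true but tight: a pure integral comparison from $j=1$ gives roughly $3.5$, so you need to sum the first ten or so terms explicitly and only bound the tail by an integral. With those repairs your argument is a complete and correct proof of both claims.
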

\begin{proof}
    % For SGdM, we set $p=1$ and $\alpha=\frac{3}{2}$, and Lemma \ref{lem:explicit-decay-difference-alpha} yields that
    % \begin{align}\label{ene-decay-expectation-alpha}
    %     \begin{split}
    %         \ene(k+1)-\ene(k)\leq&\frac{4\eta_{k+1}}{k+1}\lnorm g(x_{k+1},\xi_{k+1})\rnorm^2-\frac{2}{L}\sqrt{\frac{\eta_{k+1}}{k+1}}\|\nabla f(x_{k+1})\|^2\\
    %         &-2\sqrt{\frac{\eta_{k+1}}{k+1}}(f(x_{k+1})-f^*)\\
    %         &+4\sqrt{\frac{\eta_{k+1}}{k+1}}\langle\nabla f(x_{k+1})-g(x_{k+1},\xi_{k+1}),\tau_{k+1}\rangle.
    %     \end{split}
    % \end{align}
    % {\color{red}
    Since $\tau_k$ is $\mathcal{F}_{k}$-measurable, Assumption \ref{assump:sto-gradient} gives
    \begin{align*}
        \E[\langle\nabla f(x_{k})-g(x_{k},\xi_{k}),\tau_{k}\rangle|\mathcal{F}_{k}]
        =0
    \end{align*}
    and
    \begin{align*}
        \E[\|g(x_{k},\xi_{k})\|^2|\mathcal{F}_{k}]\leq\|\nabla f(x_{k})\|^2+\sigma^2. 
    \end{align*}
    % }
    By letting $\eta_k=\frac{1}{4L^2\log^2(k+2)}$, we have $\frac{4\eta_{k}}{k}\leq\frac{2}{L}\sqrt{\frac{\eta_{k}}{k}}$ for any $k\geq1$. Then we take conditional expectation with respect to $\mathcal{F}_{k}$ on both sides of (\ref{ene-decay-expectation-alpha}) to get
    \begin{align*}
        \E[\ene(k)-\ene(k-1)|\mathcal{F}_{k}]\leq\frac{4\eta_{k}}{k}\sigma^2-2\sqrt{\frac{\eta_{k}}{k}}(f(x_{k})-f^*).
    \end{align*}
    Consequently, taking expectation on both sides yields that
    \begin{align}\label{ene-decay-expresult-alpha}
        \begin{split}
            \E[\ene(k)]-\E[\ene(k-1)]
        \leq\frac{4\eta_{k}}{k}\sigma^2-2\sqrt{\frac{\eta_{k}}{k}}(\E[f(x_{k})]-f^*),
        \end{split}
    \end{align}
    for any $k\geq1$. Summing (\ref{ene-decay-expresult-alpha}) over $k$ yields that
    \begin{align}\label{ene-decay-expresult-alpha-1}
        \E[\ene(k)]\leq\ene(0)+4\sigma^2\sum_{i=1}^k\frac{\eta_{i}}{i}.
    \end{align}
    Additionally, the definition of the Lyapunov function (\ref{discrete-energy:alpha}) gives $4\sqrt{(k+1)\eta_k}(\E[f(x_k)]-f^*)\leq\E[\ene(k)]$ and
    \begin{align*}
        \ene(0)=\|x_0-x^*\|^2+\frac{2}{\log2\cdot L}(f(x_0)-f^*)
        \leq\left(1+\frac{2}{\log2\cdot L}\cdot\frac{L}{2}\right)\|x_0-x^*\|^2\leq3\|x_0-x^*\|^2.
    \end{align*}
    Substituting these two inequalities to (\ref{ene-decay-expresult-alpha-1}) implies that
    \begin{align*}
        \E[f(x_k)]-f^*
        \leq\frac{3\|x_0-x^*\|^2+\frac{\sigma^2}{L^2}\sum_{i=1}^k\frac{1}{i\log^2(i+2)}}{4\sqrt{\frac{k+1}{4L^2\log^2(k+2)}}}
        \leq\frac{\left(3L\|x_0-x^*\|^2+\frac{4\sigma^2}{L}\right)\log(k+2)}{2\sqrt{k+1}},
    \end{align*}
    where the second inequality follows from $\sum_{i=1}^\infty\frac{1}{i\log^2(i+2)}\leq2+\frac{1}{\log2}\leq4$. For the second part of the theorem, we rearrange (\ref{ene-decay-expresult-alpha}) as
    \begin{align*}
        2\sqrt{\frac{\eta_{k}}{k}}(\E[f(x_{k})]-f^*)
        \leq\E[\ene(k-1)]-\E[\ene(k)]+\frac{4\eta_{k}}{k}\sigma^2,
    \end{align*}
    and then sum over $k$ to get
    \begin{align}\label{ene-decay-expresult-alpha-cont}
        \begin{split}
            \sum_{i=1}^k\frac{1}{L}\sqrt{\frac{1}{i\log^2(i+2)}}(\E[f(x_i)]-f^*)
            \leq&\ene(0)-\E[\ene(k)]+\frac{\sigma^2}{L^2}\sum_{i=1}^k\frac{1}{i\log^2(i+2)}\\
            \leq&\ene(0)+\frac{\sigma^2}{L^2}\sum_{i=1}^\infty\frac{1}{i\log^2(i+2)}.
        \end{split}
    \end{align}
    The RHS of (\ref{ene-decay-expresult-alpha-cont}) is a finite number, and we denote it by $D$ for convenience. Suppose that there exists $\mu\geq0$ and $N\geq1$ such that $\E[f(x_i)]-f^*\geq\frac{\mu}{\sqrt{i}\log\log(i+2)}$ for any $i\geq N$, then we choose $k\geq N$ in (\ref{ene-decay-expresult-alpha-cont}), and the LHS is lower bounded by $\frac{\mu}{L}\sum_{i=N}^k\frac{1}{i\log(i+2)\log\log(i+2)}$. Substitute this inequality to (\ref{ene-decay-expresult-alpha-cont}) and take limit on both sides yield that
    \begin{align*}
        \frac{\mu}{L}\lim_{k\to\infty}\sum_{i=N}^k\frac{1}{i\log(i+2)\log\log(i+2)}\leq D,
    \end{align*}
    which leads to a contradiction since 
    \begin{equation*}
        \lim_{k\to\infty}\sum_{i=N}^k\frac{1}{i\log(i+2)\log\log(i+2)}=\infty.
    \end{equation*}
    Therefore, we finish the proof of (\ref{discrete-result-exp2}).
\end{proof}

\subsection{Almost Sure Convergence of SGDM}\label{sec:as}
By using a classical supermartingale convergence theorem, Lemma \ref{lem:explicit-decay-difference-alpha} also leads to the following almost sure convergence result.
\begin{theorem}\label{thm:as}
    Let Assumptions \ref{assump:convex-smooth} and \ref{assump:sto-gradient} hold. If $\{r_k\}$ satisfies $\lim_{k\to\infty}r_k=0$, then for $\{x_k\}$ generated by SGDM (\ref{eq:SGDM}) with stepsize $\eta_k=\frac{1}{L^2\log^2(k+2)}$ and initialization $x_0=x_1$, we have almost surely that
    % \begin{equation*}
    %     f(x_k)-f^*=o\left(\frac{1}{k^{\frac{1}{2}-\varepsilon}}\right),
    % \end{equation*}
    \begin{equation*}
        \lim_{k\to\infty}r_k\cdot\frac{\sqrt{k}}{\log k}\cdot(f(x_k)-f^*)=0,
    \end{equation*}
    and
    \begin{equation*}
        \liminf_{k\to\infty}\bigg(\sqrt{k}\log\log(k+2)\bigg)\bigg(f(x_k)-f^*\bigg)=0.
    \end{equation*}
\end{theorem}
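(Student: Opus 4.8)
The plan is to turn the one-step estimate of Lemma~\ref{lem:explicit-decay-difference-alpha} into a supermartingale recursion for $\ene(k)$ and then invoke a Robbins--Siegmund-type supermartingale convergence theorem. First I would take the conditional expectation given $\mathcal{F}_k$ in (\ref{ene-decay-expectation-alpha}). Because $\tau_k$ is $\mathcal{F}_k$-measurable and $\E[g(x_k,\xi_k)\mid\mathcal{F}_k]=\nabla f(x_k)$ by Assumption~\ref{assump:sto-gradient}, the inner-product term vanishes in conditional mean; using $\E[\lnorm g(x_k,\xi_k)\rnorm^2\mid\mathcal{F}_k]\le\lnorm\nabla f(x_k)\rnorm^2+\sigma^2$ together with the cancellation $\tfrac{4\eta_k}{k}\le\tfrac2L\sqrt{\tfrac{\eta_k}{k}}$ already exploited in the proof of Theorem~\ref{thm:algo-rate}, the two gradient-norm terms combine to a nonpositive quantity, leaving
\begin{equation*}
    \E[\ene(k)\mid\mathcal{F}_k]\le\ene(k-1)+\frac{4\eta_k}{k}\sigma^2-2\sqrt{\frac{\eta_k}{k}}\,(f(x_k)-f^*).
\end{equation*}

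Next I would verify the hypotheses of the convergence theorem for $V_k:=\ene(k-1)$. Both summands in (\ref{discrete-energy:alpha}) are nonnegative (since $f(x_k)\ge f^*$), so $\ene(k)\ge0$; the additive term $\tfrac{4\eta_k}{k}\sigma^2=\tfrac{\sigma^2}{L^2 k\log^2(k+2)}$ is deterministically summable; and the subtracted term $\zeta_k:=2\sqrt{\eta_k/k}\,(f(x_k)-f^*)$ is nonnegative and $\mathcal{F}_k$-measurable. The theorem then yields, simultaneously and almost surely, that (i) $\ene(k)$ converges to a finite random limit, and (ii) $\sum_k\zeta_k<\infty$, i.e. $\sum_k\tfrac{1}{L\sqrt{k}\log(k+2)}(f(x_k)-f^*)<\infty$ almost surely.

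I would then read off the two displays separately. For the first, conclusion (i) makes $\ene(k)$ almost surely bounded by a random $M$; discarding the squared-norm term in (\ref{discrete-energy:alpha}) gives $4\sqrt{(k+1)\eta_k}\,(f(x_k)-f^*)\le M$, and since $\sqrt{(k+1)\eta_k}$ is of order $\sqrt{k}/\log k$, the quantity $\tfrac{\sqrt k}{\log k}(f(x_k)-f^*)$ is almost surely bounded; multiplying this bounded path by the deterministic sequence $r_k\to0$ yields the first limit. For the second, conclusion (ii) is used exactly as in the in-expectation argument for (\ref{discrete-result-exp2}): on the event $\{\liminf_k(\sqrt k\log\log(k+2))(f(x_k)-f^*)>0\}$ there are (path-dependent) $\mu>0$ and $N$ with $f(x_k)-f^*\ge\tfrac{\mu}{\sqrt k\log\log(k+2)}$ for $k\ge N$, forcing $\sum_k\zeta_k\ge\tfrac\mu L\sum_{k\ge N}\tfrac{1}{k\log(k+2)\log\log(k+2)}=\infty$; since this contradicts (ii), the event is null and the liminf is almost surely zero.

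The conditioning and index bookkeeping are routine. The step demanding the most care is the measure-theoretic handling of the liminf: the constants $\mu$ and $N$ are random, so one must argue $\omega$-wise that the divergence of the iterated-logarithm series $\sum_k\tfrac{1}{k\log(k+2)\log\log(k+2)}$ forces the almost-surely-finite random sum $\sum_k\zeta_k$ to be infinite on any positive-probability subset of that event. A secondary point is that one must extract from a \emph{single} application of the supermartingale theorem both a boundedness statement (from almost-sure convergence of $\ene$, used for the $r_k$ limit) and a summability statement (from the $\zeta_k$ term, used for the liminf), since the two displays rely on these two different consequences.
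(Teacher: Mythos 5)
Your proposal is correct and matches the paper's own proof essentially step for step: both take conditional expectations in Lemma \ref{lem:explicit-decay-difference-alpha} to obtain the recursion $\E[\ene(k)\mid\mathcal{F}_k]\le\ene(k-1)+\frac{4\eta_k}{k}\sigma^2-2\sqrt{\eta_k/k}\,(f(x_k)-f^*)$, apply the Robbins--Siegmund supermartingale convergence theorem with the identical choices $z_k=\ene(k-1)$, $\xi_k=\frac{4\eta_k}{k}\sigma^2$, $\zeta_k=2\sqrt{\eta_k/k}\,(f(x_k)-f^*)$, and then derive the first display from almost-sure convergence (hence boundedness) of $\ene(k)$ together with $r_k\to0$, and the second from the almost-sure summability of $\zeta_k$ via the divergence of $\sum_k\frac{1}{k\log(k+2)\log\log(k+2)}$. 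Your explicit $\omega$-wise handling of the random $\mu$ and $N$ in the liminf argument is in fact slightly more careful than the paper's wording, but the underlying argument is the same.
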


\begin{proof}
    The proof of Theorem \ref{thm:as} relies on the following supermartingale convergence result in \citet{robbins1971convergence}.
    \begin{lemma}\label{lemma:r-s}
        Let $(\Omega,\mathcal{F},\Pr)$ be a probability space and $\F_1\subset\F_2\subset\cdots$ a sequence of sub-$\sigma$-algebras of $\F$. For each $k=1,2,\cdots$, let $z_k$, $\xi_k$ and $\zeta_k$ be non-negative $\mathcal{F}_k$-measurable random variables such that $\sum_{i=1}^\infty\xi_i<\infty$ almost surely and
        \begin{align*}
            \E[z_{k+1}|\mathcal{F}_k]\leq z_k+\xi_k-\zeta_k,
        \end{align*}
        for any $k\geq1$. Then $\lim_{k\to\infty}z_k$ exists and is finite and $\sum_{i=0}^\infty\zeta_i<\infty$ almost surely.
    \end{lemma}
    In the proof of Theorem \ref{thm:algo-rate}, we have shown that with stepsize $\eta_k=\frac{1}{L^2\log^2(k+2)}$, the inequality
    \begin{align*}
        \E[\ene(k)-\ene(k-1)|\mathcal{F}_{k}]\leq\frac{4\eta_{k}}{k}\sigma^2-2\sqrt{\frac{\eta_{k}}{k}}(f(x_{k})-f^*)
    \end{align*}
    holds for any $k\geq1$. The definition yields that $\ene(k-1)$ and $2\sqrt{\frac{\eta_{k}}{k}}(f(x_{k})-f^*)$ are $\F_k$-measurable, and $\sum_{i=1}^\infty\frac{4\eta_{i}}{i}\sigma^2<\infty$. Therefore, we let $z_k=\ene(k-1)$, $\xi_k=\frac{4\eta_{k}}{k}\sigma^2$, and $\zeta_k=2\sqrt{\frac{\eta_{k}}{k}}(f(x_{k})-f^*)$, and thus applying Lemma \ref{lemma:r-s} gives
    \begin{enumerate}
        \item $\lim_{k\to\infty}\ene(k)=\ene_\infty$ exists with probability one;
        \item $\sum_{k=1}^\infty \sqrt{\frac{\eta_k}{k}}(f(x_{k})-f^*)$ exists with probability one.
    \end{enumerate}
    The definition of the Lyapunov function (\ref{discrete-energy:alpha}) yields that 
    \begin{align}\label{eq:as1}
        r_k\cdot\frac{\sqrt{k}}{\log k}\cdot(f(x_k)-f^*)\leq r_k\cdot\frac{\sqrt{k}}{\log k}\cdot\frac{\ene(k)}{4\sqrt{(k+1)\eta_k}}=r_k\cdot\frac{\sqrt{k}}{\log k}\cdot\frac{L\log(k+2)}{4\sqrt{(k+1)}}\cdot\ene(k).
    \end{align}
    Since $\lim_{k\to\infty}\ene(k)$ exists almost surely and $\lim_{k\to\infty}r_k=0$, (\ref{eq:as1}) yields that $\lim_{k\to\infty}r_k\cdot\frac{\sqrt{k}}{\log k}\cdot(f(x_k)-f^*)=0$ almost surely, and thus we finish the proof of the first part.

    For the second part of the theorem, we would show that 
    \begin{align}\label{eq:as2}
        \sum_{k=1}^\infty \sqrt{\frac{\eta_k}{k}}(f(x_{k})-f^*)<\infty
    \end{align}
    yields 
    \begin{align}\label{eq:as3}
        \liminf_{k\to\infty}\frac{f(x_k)-f^*}{\frac{1}{\sqrt{k}\log\log(k+2)}}=0.
    \end{align}
    Suppose that there exist $\mu\geq0$ and $N\geq1$ such that $f(x_i)-f^*\geq\frac{\mu}{\sqrt{i}\log\log(i+2)}$ for any $i\geq N$. Then we have
    \begin{align*}
        \sum_{k=1}^\infty \sqrt{\frac{\eta_k}{k}}(f(x_{k})-f^*)\geq\frac{\mu}{L}\sum_{k=N}^\infty\frac{1}{k\log(k+2)\log\log(k+2)}=\infty,
    \end{align*}
    which contradicts (\ref{eq:as2}). Since (\ref{eq:as2}) holds almost surely, this contradiction yields that (\ref{eq:as3}) also holds almost surely, and we complete the proof of Theorem \ref{thm:as}.
\end{proof}

% \textcolor{red}{checked till here}

\section{Anytime High Probability Convergence Result}\label{sec:hp}

% \textcolor{red}{modify to a high-level intuition}

In this section we present the anytime high probability convergence result for SGDM. Our analysis is based on the dynamical property illustrated by the discrete Lyapunov function $\ene(k)$ and Lemma \ref{lem:explicit-decay-difference-alpha}. We introduce a new approach to bound the increment $\ene(k)-\ene(k-1)$ by $\ene(k-1)$, the Lyapunov function of the previous step, which leads to a Gronwall-type argument. This technique is critical in removing the reliance on additional projection steps or assumptions of bounded gradients or stochastic gradients. Subsequently, by modifying the moment generating function of $\ene(k)$, we construct a supermartingale and establish the desired anytime high probability convergence. To begin with, we introduce the following assumption.

% Our analysis is based on the discrete-time descent property (Lemma \ref{lem:explicit-decay-difference-alpha}), and contains a new approach that bounds a martingale difference sequence by the Lyapunov function. More importantly, the discrete-time Lyapunov function derived from the continuous-time analysis plays an essential role in characterizing the anytime convergence property of SGDM. To begin with, we introduce the following assumption.

% {\color{red}
\begin{assumption}\label{assump:hp}
    For any $k\geq1$, the stochastic gradient $g(x_k,\xi_k)$ satisfies
    \begin{equation*}
        \E\left[\left.\exp\left(\frac{\|g(x_k,\xi_k)-\nabla f(x_k)\|^2}{\sigma^2}\right)\right|\mathcal{F}_k\right]\leq\exp(1).
    \end{equation*}
\end{assumption}
% }
We now present our main convergence result.

\begin{theorem}\label{thm:real-anytime}
    Let Assumptions \ref{assump:convex-smooth}, \ref{assump:sto-gradient} and \ref{assump:hp} hold. For any $\beta$, the sequence $\{x_k\}$ generated by SGDM (\ref{eq:SGDM}) with stepsize $\eta_k=\frac{1}{16L^2\log^2(k+2)}$ and initialization $x_0=x_1$ satisfies
    \begin{align*}
        \Pr\left(f(x_k)-f^*\leq\frac{\left(C_1+C_2\log\frac{1}{\beta}\right)\log(k+2)}{\sqrt{k+1}},\;\text{for all $k\geq0$}\right)\geq1-2\beta,
    \end{align*}
    where $C_1=L\gamma_2\ene(0)+L\sigma^2(1+\sigma^2\gamma_1\gamma_2)\gamma_1$, $C_2=L\gamma_2+L\sigma^2(1+\sigma^2\gamma_1\gamma_2)\gamma_1$, $\gamma_1=\sum_{k=1}^\infty \frac{16\eta_k}{k}$, and $\gamma_2=\prod_{k=1}^\infty(1+\frac{16\eta_k}{k}\sigma^2)$.
\end{theorem}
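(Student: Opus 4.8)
The plan is to reduce the anytime statement about $f(x_k)-f^*$ to an anytime bound on the Lyapunov function $\ene(k)$, and to obtain the latter from a maximal inequality applied to a carefully compensated exponential process. First I would record the conversion: from the definition (\ref{discrete-energy:alpha}) one has $4\sqrt{(k+1)\eta_k}\,(f(x_k)-f^*)\le\ene(k)$, and with $\eta_k=\frac{1}{16L^2\log^2(k+2)}$ this reads $f(x_k)-f^*\le\frac{L\log(k+2)}{\sqrt{k+1}}\,\ene(k)$. Hence it suffices to show that, with probability at least $1-2\beta$, $\ene(k)\le(C_1+C_2\log\frac1\beta)/L$ simultaneously for all $k$. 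Next I would process the one-step estimate of Lemma \ref{lem:explicit-decay-difference-alpha}: writing $\delta_k:=g(x_k,\xi_k)-\nabla f(x_k)$ and using $\|g(x_k,\xi_k)\|^2\le2\|\nabla f(x_k)\|^2+2\|\delta_k\|^2$ together with the inequality $\frac{8\eta_k}{k}\le\frac{2}{L}\sqrt{\frac{\eta_k}{k}}$ (valid here since $\sqrt{\eta_k/k}\le\frac{1}{4L}$), the $\|\nabla f(x_k)\|^2$ contribution is absorbed by the $-\frac{2}{L}\sqrt{\eta_k/k}\|\nabla f(x_k)\|^2$ term and the nonpositive descent term $-2\sqrt{\eta_k/k}(f(x_k)-f^*)$ may be dropped, leaving
\[
\ene(k)\le\ene(k-1)+\tfrac{8\eta_k}{k}\|\delta_k\|^2-4\sqrt{\tfrac{\eta_k}{k}}\,\langle\delta_k,\tau_k\rangle.
\]
The crucial structural fact, read directly off (\ref{discrete-energy:alpha}), is the self-bound $\|\tau_k\|^2\le\ene(k-1)$, since $\tau_k=x_k+k(x_k-x_{k-1})-x^*$ is exactly the vector whose squared norm is the first summand of $\ene(k-1)$.

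The heart of the argument is a Gronwall-type moment-generating-function recursion. For a parameter $\theta>0$ I would bound $\E[\exp(\theta(\ene(k)-\ene(k-1)))\mid\F_k]$ by Hölder's inequality, splitting the two noise contributions. The quadratic term is sub-exponential: Assumption \ref{assump:hp} gives $\E[\exp(r\|\delta_k\|^2)\mid\F_k]\le\exp(r\sigma^2+c''r^2\sigma^4)$ for $r$ below $1/\sigma^2$, contributing only a deterministic drift of order $\frac{\eta_k}{k}\sigma^2$. The cross term is a martingale difference, $\E[\langle\delta_k,\tau_k\rangle\mid\F_k]=0$, and since $\tau_k$ is $\F_k$-measurable the norm-subGaussianity implied by Assumption \ref{assump:hp} yields $\E[\exp(s\langle\delta_k,\tau_k\rangle)\mid\F_k]\le\exp(c\,s^2\sigma^2\|\tau_k\|^2)$; here the self-bound $\|\tau_k\|^2\le\ene(k-1)$ is invoked. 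Combining these produces an estimate of the form $\E[\exp(\theta\ene(k))\mid\F_k]\le\exp\!\big(\theta(1+c\theta\tfrac{\eta_k}{k}\sigma^2)\ene(k-1)+c'\theta\tfrac{\eta_k}{k}\sigma^2\big)$, in which the multiplicative Gronwall factor $1+c\theta\frac{\eta_k}{k}\sigma^2$ (source of the product $\gamma_2$) and the additive drift (summing to the $\gamma_1$-terms) both appear.

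To turn this into a genuine supermartingale I would introduce a backward-defined deterministic sequence $\theta_k$ obeying the self-consistent recursion $\theta_{k-1}=\theta_k(1+c\theta_k\frac{\eta_k}{k}\sigma^2)$, which stays in a bounded interval $[\theta_\infty,\theta_\infty\gamma_2]$ precisely because $\sum_k\frac{\eta_k}{k}<\infty$ makes $\gamma_1,\gamma_2$ finite. Then $M_k:=\exp\!\big(\theta_k\ene(k)-\sum_{i\le k}c'\theta_i\frac{\eta_i}{i}\sigma^2\big)$ is a nonnegative supermartingale with $\E[M_0]=\exp(\theta_0\ene(0))$, by the recursion together with the MGF bound. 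Ville's maximal inequality for nonnegative supermartingales then gives $\Pr(\sup_k M_k\ge e^{\theta_0\ene(0)}/\beta)\le\beta$; unpacking the logarithm and using $\theta_k\ge\theta_\infty$ produces a uniform-in-$k$ bound $\ene(k)\le\gamma_2\ene(0)+\frac1{\theta_\infty}\log\frac1\beta+(\text{the }\gamma_1\text{-drift})$, which after the conversion of the first paragraph is exactly the claimed rate with the stated $C_1,C_2$.

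The main obstacle is precisely the appearance of $\ene(k-1)$ inside the exponent of the MGF bound, coming from $\|\tau_k\|^2$: a fixed $\theta$ cannot telescope because each step inflates the effective exponent by the factor $1+c\theta\frac{\eta_k}{k}\sigma^2$. The device of a self-consistent, time-varying $\theta_k$ resolves this, but its viability rests entirely on the summability of $\eta_k/k$, so verifying the convergence of $\gamma_1,\gamma_2$ and the resulting two-sided bounds on $\theta_k$ is the delicate quantitative point, and is what dictates the $\log^2$ factor in the stepsize. A secondary care point is the second-order term in the sub-exponential MGF of $\|\delta_k\|^2$, which is what generates the $(1+\sigma^2\gamma_1\gamma_2)$ correction in $C_1,C_2$; and the final $1-2\beta$ (rather than $1-\beta$) most plausibly reflects a union bound, or a stopping-time localization, used to keep the MGF step within its valid range for all $k$ simultaneously.
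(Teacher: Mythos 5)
Your proposal is correct and its core is the same as the paper's: the reduction $f(x_k)-f^*\le\frac{L\log(k+2)}{\sqrt{k+1}}\ene(k)$, the one-step increment bound from Lemma \ref{lem:explicit-decay-difference-alpha} with the self-bound $\|\tau_k\|^2\le\ene(k-1)$, a Gronwall-compensated exponential supermartingale with backward-defined exponents, and Ville's inequality. The paper's multiplier $\prod_{l=k+1}^{\infty}(1+a_l\sigma^2)\,t$ in $N^t(k)$ is exactly your time-varying $\theta_k$, except that the paper makes the recursion \emph{linear}: it first splits $\ene(k)=M(k)+S(k)$ with $S(k)=\sum_{l\le k}a_l\|\theta_l\|^2$, builds the supermartingale only from $M(k)$, and uses $t^2\le t$ for $t\le 1$ to turn the inflation factor into the $\theta$-independent $(1+a_k\sigma^2)$ — sidestepping the existence/boundedness argument that your nonlinear self-consistent recursion $\theta_{k-1}=\theta_k(1+c\theta_k a_k\sigma^2)$ requires (in your scheme, once you enforce $\theta_k\le 1$, you can replace the factor by $(1+ca_k\sigma^2)$ and you have essentially reproduced the paper's device). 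The genuine difference is your treatment of the quadratic noise: the paper keeps $S(k)$ as a \emph{random} compensator inside $N^t(k)$, bounds it separately via a sub-exponential concentration lemma together with monotonicity of $S(k)$, and pays a union bound — this is where the paper's $1-2\beta$ comes from. You instead absorb $\|\delta_k\|^2$ as a \emph{deterministic} drift via the MGF bound $\E[\exp(r\|\delta_k\|^2)\mid\F_k]\le\exp(r\sigma^2)$ (valid for $r\sigma^2\le1$ by Jensen from Assumption \ref{assump:hp}), so a single application of Ville suffices; your closing speculation that the $2\beta$ reflects a union bound you would also need is therefore unnecessary — your route, carried out, yields the stronger $1-\beta$. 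One caveat: your constants will carry the norm-sub-Gaussian constant from the cross-term MGF lemma (the paper's Lemma B-type bound used in its proof) and the Cauchy--Schwarz factors, so you obtain the stated \emph{form} $\bigl(C_1+C_2\log\frac1\beta\bigr)\frac{\log(k+2)}{\sqrt{k+1}}$ but not literally the paper's $C_1,C_2$; claiming "exactly the stated constants" is an overstatement, though immaterial to the theorem's content.
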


Before proceeding with the proof, we explain the intuition of our approach. The Lyapunov function $\ene(k)$ and Lemma \ref{lem:explicit-decay-difference-alpha} provides the dynamical property of SGDM. More specifically, Lemma \ref{lem:explicit-decay-difference-alpha} yields that
    \begin{align}\label{eq:descent-property}
        \begin{split}
            \ene(k)-\ene(k-1)
        \leq&\frac{4\eta_{k}}{k}\lnorm g(x_{k},\xi_{k})\rnorm^2-\frac{2}{L}\sqrt{\frac{\eta_{k}}{k}}\|\nabla f(x_{k})\|^2
        +4\sqrt{\frac{\eta_{k}}{k}}\langle\nabla f(x_{k})-g(x_{k},\xi_{k}),\tau_{k}\rangle\\
        \leq&\frac{8\eta_{k}}{k}\lnorm \theta_k\rnorm^2+\frac{8\eta_k}{k}\|\nabla f(x_k)\|^2
        -\frac{2}{L}\sqrt{\frac{\eta_{k}}{k}}\|\nabla f(x_{k})\|^2+4\sqrt{\frac{\eta_{k}}{k}}\langle\theta_k,\tau_{k}\rangle\\
        \leq&a_k\lnorm \theta_k\rnorm^2+\sqrt{a_k}\langle\theta_k,\tau_{k}\rangle,
        \end{split}
    \end{align}
    where $\tau_k=k(x_k-x_{k-1})+(x_k-x^*)$, $\theta_k=\nabla f(x_k)-g(x_k,\xi_k)$, $a_k=\frac{16\eta_k}{k}$, and the second inequality follows from $\eta_{k}\leq\frac{k}{16L^2}$ for $k\geq1$.
% The above inequality (\ref{eq:descent-property}) shows that the difference of the sequence $\{\ene(k)\}$ can be divided into two parts. Classic concentration argument yields that sum of the first part, $\sum_k a_k\|\theta_k\|^2$, is finite with high probability. Moreover, the second part can also be bounded since $\|\tau_k\|$ is contained in the Lyapunov function of the previous step. Therefore, we roughly conclude from (\ref{eq:descent-property}) that the sequence $\{\ene(k)\}$ is decreasing, and thus we can derive the anytime high probability convergence result by a supermartingale argument.
% \textcolor{blue}{
% Intuitively, the above inequality (\ref{eq:descent-property}) decouples the increment of $\ene(k)$
% into two terms.
% Classic concentration arguments bound the sum of the first part, $\sum_k a_k\|\theta_k\|^2$ with high probability. The second part, however, is more involuted. Noticing $\|\tau_k\|^2\leq \ene(k-1)$, we can apply a Gronwall-type inequality to the moment generating function, and hence upper bound the Lyapunov function $\ene(k)$ with a high probability.
% }
Intuitively, the above inequality (\ref{eq:descent-property}) decouples the increment of $\ene(k)$
into two terms.
Classic concentration arguments bound the sum of the first part, $\sum_k a_k\|\theta_k\|^2$ with high probability. The second part, however, is more involuted. Noticing $\|\tau_k\|^2\leq \ene(k-1)$, we derive a Gronwall-type inequality and 
construct a supermartingale by modifying the moment generating function of $\ene(k)$. Consequently, we derive the desired anytime convergence result. To simplify the proof, we assume the stochastic terms $\{\xi_k\}$ are independent in the analysis below. This simplification can be removed by the martingale argument using conditional expectation.
% show that a modification of the moment generating function of $\ene(k)$ is approximately non-increasing. Consequently, we derive the anytime high-probability convergence result using a supermartingale argument.
\begin{proof}
    To prove Theorem \ref{thm:real-anytime}, we would present concentration estimates on $\Pr(\sup_{k\geq1}\ene(k)\geq \alpha)$. We denote $S(k)=\sum_{l=1}^ka_l\|\theta_l\|^2$ and $M(k)=\ene(k)-S(k)$. This gives
    \begin{align}\label{eq:diff-of-M}
        M(k)-M(k-1)=(\ene(k)-\ene(k-1))-(S(k)-S(k-1))
        \leq\sqrt{a_k}\langle\theta_k,\tau_{k}\rangle.
    \end{align}
    We write $\gamma_1=\sum_{k=1}^\infty a_k$, and $\gamma_2=\prod_{k=1}^\infty(1+a_k\sigma^2)$. It is direct to verify that these two constants are finite for $a_k=\frac{16\eta_k}{k}=\frac{1}{L^2k\log^2(k+2)}$.

    For the moment-generating function of $M(k)$, (\ref{eq:diff-of-M}) gives
    \begin{align*}
        \mathbb{E}[\exp(t M(k))|\mathcal{F}_{k-1}] &\leq \exp(t M(k-1)) \mathbb{E}[\exp (t \sqrt{a_k} \langle \theta_k,\tau_k\rangle) |\mathcal{F}_{k-1}].
    \end{align*}
    Definition of the Lyapunov function (\ref{discrete-energy:alpha}) yields that $\|\tau_k\|^2\leq\ene(k-1)$, and thus Lemma \ref{lemma:large-dev-3} gives
    \begin{align*}
        \mathbb{E}[\exp (t \sqrt{a_k} \langle \theta_k,\tau_k\rangle) |\mathcal{F}_{k-1}]\leq\exp(a_k\sigma^2t^2\ene(k-1)).
    \end{align*}
    Therefore, for $t\leq1$, we have
    \begin{align}\label{eq:mgf-decomposition}
        \begin{split}
            \mathbb{E}[\exp(t M(k))|\mathcal{F}_{k-1}]&\leq \exp(tM(k-1)+a_k \sigma^2 t^2 \mathcal{E}(k-1))\\
            &\leq \exp ((1+a_k \sigma^2)t M(k-1)+a_k \sigma^2 t S(k-1)),
        \end{split}
    \end{align}
    where the second inequality follows from $\ene(k-1)=M(k-1)+S(k-1)$ and $t\leq1$.

    We write
    \begin{align*}
        N^t(k)=\exp\left(\prod_{l=k+1}^{\infty}(1+a_l \sigma^2) t M(k)-\sum_{l=1}^{k}a_l\sigma^2\gamma_2 tS(l-1)\right).
    \end{align*}
    Specifically, this definition gives $N^t(0)=\exp\left(\prod_{l=0}^\infty(1+a_l\sigma^2)tM(0)\right)=\exp(\gamma_2t\ene(0))$. The conditional expectation of $N^t(k)$ for $t\leq\frac{1}{\gamma_2}$ satisfies
    \begin{align}
        &\E[N^t(k)|\mathcal{F}_{k-1}]\nonumber\\
        =&\E\left[\left.\exp\left(\prod_{l=k+1}^{\infty}(1+a_l \sigma^2) t M(k)-\sum_{l=1}^{k}a_l\sigma^2\gamma_2 tS(l-1)\right)\right|\mathcal{F}_{k-1}\right]\nonumber\\
        =&\E\left[\left.\exp\left(\prod_{l=k+1}^{\infty}(1+a_l \sigma^2) t M(k)\right)\right|\mathcal{F}_{k-1}\right]\cdot\exp\left(-\sum_{l=1}^{k}a_l\sigma^2\gamma_2 tS(l-1)\right)\nonumber\\
        \leq&\exp\left(\prod_{l=k}^{\infty}(1+a_l \sigma^2) t M(k-1)+a_k\sigma^2\prod_{l=k+1}^{\infty}(1+a_l \sigma^2)tS(k-1)-\sum_{l=1}^{k}a_l\sigma^2\gamma_2 tS(l-1)\right)\label{eq:Ntk}\\
        =&N^t(k-1)\cdot\exp\left(a_k\sigma^2\prod_{l=k+1}^{\infty}(1+a_l \sigma^2)tS(k-1)-a_k\sigma^2\gamma_2 tS(k-1)\right)\nonumber\\
        \leq& N^t(k-1),\nonumber
    \end{align}
    where (\ref{eq:Ntk}) follows from  (\ref{eq:mgf-decomposition}). Therefore, $\{N^t(k)\}_{k=0}^\infty$ is a supermartingale for $0<t\leq\frac{1}{\gamma_2}$. Then applying Ville's inequality \citep{ville1939etude} to $\{N^t(k)\}_{k=0}^\infty$ gives
    \begin{align}\label{eq:doob}
        \Pr\left(\sup_{k\geq0}N^t(k)\geq\exp(\alpha t)\right)\leq\exp(-\alpha t)\E[N^t(0)]=\exp(-\alpha t+\gamma_2t\ene(0)).
    \end{align}
    Since $\gamma_1S(k)\geq\sum_{l=1}^ka_lS(l-1)$, we have
    \begin{align*}
        \exp\left(\frac{M(k)}{\gamma_2}-\sigma^2\gamma_1S(k)\right)\leq\exp\left(\frac{\prod_{l=k+1}^\infty(1+a_l\sigma^2)M(k)}{\gamma_2}-\sum_{l=1}^ka_l\sigma^2S(l-1)\right)=N^{\frac{1}{\gamma_2}}(k),
    \end{align*}
    and thus
    \begin{align}\label{eq:anytime-final1}
        \begin{split}
            &\Pr\left(\sup_{k\geq0}\left\{M(k)-\sigma^2\gamma_1\gamma_2S(k)\right\}\geq\gamma_2\left(\ene(0)+\log\frac{1}{\beta}\right)\right)\\
            =&\Pr\left(\sup_{k\geq0}\left\{\exp\left(\frac{M(k)}{\gamma_2}-\sigma^2\gamma_1S(k)\right)\right\}\geq\exp\left(\ene(0)+\log\frac{1}{\beta}\right)\right)\\
            \leq&\Pr\left(\sup_{k\geq0}N^{\frac{1}{\gamma_2}}(k)\geq\exp\left(\ene(0)+\log\frac{1}{\beta}\right)\right)\leq\beta,
        \end{split}
    \end{align}
    where the last inequality follows from (\ref{eq:doob}) with $t=\frac{1}{\gamma_2}$ and $\alpha=\gamma_2\left(\ene(0)+\log\frac{1}{\beta}\right)$.

    On the other hand, Lemma \ref{lemma:large-dev-1} gives
    \begin{align*}
            &\Pr\left((1+\sigma^2\gamma_1\gamma_2)S(k)\geq\left(1+\log\frac{1}{\beta}\right)\sigma^2(1+\sigma^2\gamma_1\gamma_2)\gamma_1\right)\\
        \leq&\Pr\left((1+\sigma^2\gamma_1\gamma_2)S(k)\geq\left(1+\log\frac{1}{\beta}\right)\sigma^2(1+\sigma^2\gamma_1\gamma_2)\sum_{l=1}^ka_l\right)\leq\beta.
    \end{align*}
    Since $\{S(k)\}_{k=0}^\infty$ is monotonically increasing, we further have
    \begin{align}\label{eq:anytime-final2}
        &\Pr\left(\sup_{k\geq0}\left\{(1+\sigma^2\gamma_1\gamma_2)S(k)\right\}\geq\left(1+\log\frac{1}{\beta}\right)\sigma^2(1+\sigma^2\gamma_1\gamma_2)\gamma_1\right)\leq\beta.
    \end{align}

    Combining (\ref{eq:anytime-final1}) and (\ref{eq:anytime-final2}), we derive
    \begin{align*}
        &\Pr\left(\sup_{k\geq 0}\ene(k)\geq \gamma_2\left(\ene(0)+\log\frac{1}{\beta}\right)+\left(1+\log\frac{1}{\beta}\right)\sigma^2(1+\sigma^2\gamma_1\gamma_2)\gamma_1\right)\\
        \leq&  \Pr\left(\sup_{k\geq0}\left\{M(k)-\sigma^2\gamma_1\gamma_2S(k)\right\}\geq\gamma_2\left(\ene(0)+\log\frac{1}{\beta}\right)\right)\\
        &+\Pr\left(\sup_{k\geq0}\left\{(1+\sigma^2\gamma_1\gamma_2)S(k)\right\}\geq\left(1+\log\frac{1}{\beta}\right)\sigma^2(1+\sigma^2\gamma_1\gamma_2)\gamma_1\right)\\
        \leq&2\beta.
    \end{align*}
    Since $4\sqrt{(k+1)\eta_k}(f(x_k)-f^*)\leq\ene(k)$, we conclude that
    \begin{align*}
        \Pr\left(f(x_k)-f^*\leq\frac{\left(C_1+C_2\log\frac{1}{\beta}\right)\log(k+2)}{\sqrt{k+1}},\;\text{for all $k\geq0$}\right)\geq1-2\beta,
    \end{align*}
    where $C_1=L\gamma_2\ene(0)+L\sigma^2(1+\sigma^2\gamma_1\gamma_2)\gamma_1$, and $C_2=L\gamma_2+L\sigma^2(1+\sigma^2\gamma_1\gamma_2)\gamma_1$.
\end{proof}
\begin{remark}\label{rmk:better-log} 
    By slightly modifying the stepsize, the convergence rate in Theorem \ref{thm:real-anytime} can be improved to $\mathcal{O}\left(\frac{\log\frac{1}{\beta}\log^\frac{1+\varepsilon}{2}k}{\sqrt{k}}\right)$, where $\varepsilon$ is a positive number that can be arbitrarily small; see Appendix \ref{app:better-log} for more details. 
\end{remark} 

\begin{figure*}[ht]
    \centering
	\subfigure[Artificial Experiment]{
        \includegraphics[width=7cm]{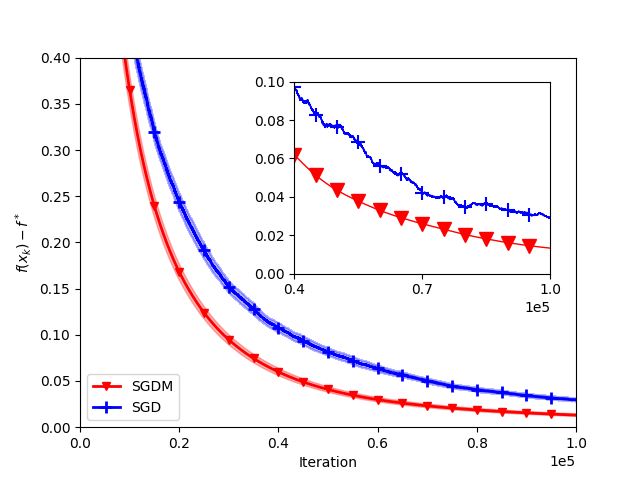}
		\label{fig:toy}
	}
        \subfigure[Logistic Regression]{
        \includegraphics[width=7cm]{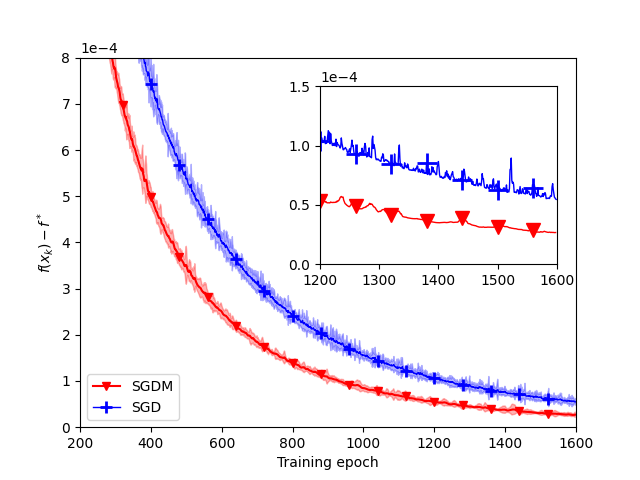}
		\label{fig:logreg}
	}\\
	\subfigure[MNIST]{
		\includegraphics[width=7cm]{./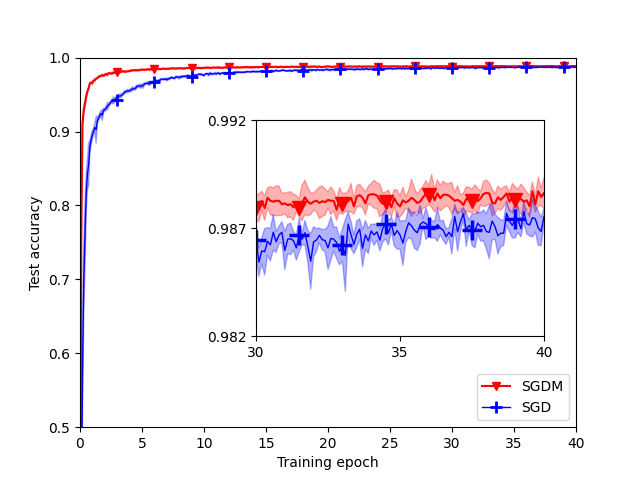}
		\label{fig:mnist}
	}
	\subfigure[CIFAR-10]{
		\includegraphics[width=7cm]{./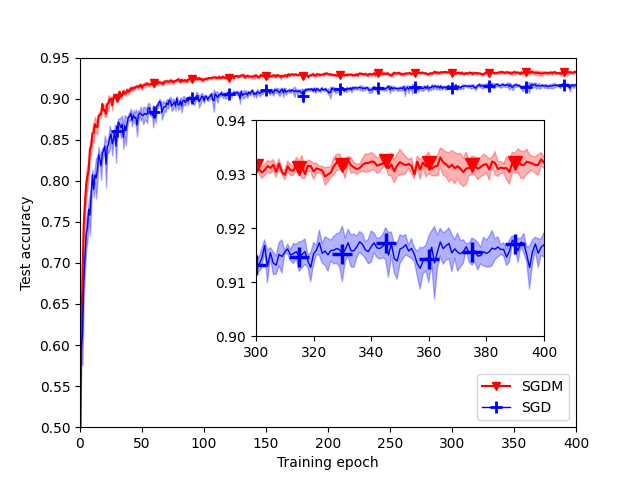}
		\label{fig:cifar}
	}
    \caption{Results of the artificial experiment (Figure \ref{fig:toy}), logistic regression (Figure \ref{fig:logreg}), image classification on MNIST (Figure \ref{fig:mnist}) and CIFAR-10 (Figure \ref{fig:cifar}). The solid lines in the main plot of all figures are obtained through averaging $10$ independent runs, and the variance regions are represented by shaded areas above and below the solid lines. The subplot in Figure \ref{fig:toy} and \ref{fig:logreg} presents an individual run, and the subplots in Figure \ref{fig:mnist} and \ref{fig:cifar} are zoomed in to provide better views of the last $1/4$ training processes.}
    \label{fig:adam}
\end{figure*} 

\section{Experiments} \label{sec:exp}
In this section, we test SGDM, in comparison with SGD, on an artificial example, a logistic regression problem, and two image classification tasks.

Firstly, we investigate the performance of SGDM on an artificial optimization problem and a logistic regression task. In the artificial experiment, the objective is a $10$-dimensional quadratic function, and a Gaussian noise is added to each gradient output. In the logistic regression task, the dataset is taken from LIBSVM library \citep{CC01a}, and the approximation of the minimal function value $f^*$ is obtained by a standard solver from \texttt{scipy} \citep{2020SciPy-NMeth}. For SGD, we use the classic stepsize $\eta_k=\frac{1}{\sqrt{k}}$. The results are presented in Figure \ref{fig:toy} and \ref{fig:logreg}. We observe that SGDM converges faster than SGD. Furthermore, the gradient noise in each iteration of SGDM is multiplied by a factor of order $\widetilde{\mathcal{O}}\left(\frac{1}{k^{\frac{3}{2}}}\right)$, while that of SGD is multiplied by a factor of order $\mathcal{O}\left(\frac{1}{\sqrt{k}}\right)$. This results in the error curve of SGDM being significantly smoother than SGD, as observed from the individual run in the subplot of Figure \ref{fig:toy} and \ref{fig:logreg}. Detailed settings about these two experiments and more plots of individual runs are presented in Appendix \ref{app:exp}.

Then we apply both SGDM and SGD to train neural networks for two image classification tasks, MNIST and CIFAR-10. The model used in the MNIST task is a two-layer CNN, and the model used in the CIFAR-10 task is Resnet-18 \citep{he2016deep}. The learning rate of SGD is tuned by the common hyperparameter selection practice \citep{bergstra2012random}. More details can be found in Appendix \ref{app:exp}. The results are presented in Figure \ref{fig:mnist} and \ref{fig:cifar}, which confirm the better performance of SGDM. Comparison of individual runs (presented in Appendix \ref{app:exp}) also shows that the learning curves of SGDM have smaller oscillations than SGD.

% \section{Conclusion}
% In this paper, we investigate the continuous-discrete relationship in stochastic optimization field. We propose the SGDM method, and prove that its continuous-time limit is a second-order ODE. We further develop the convergence results for this ODE through a continuous Lyapunov function, and then translate the whole argument to the discrete case. By taking advantage of
% % the discrete Lyapunov argument, we derive an $\widetilde{\mathcal{O}}\left(\frac{\log \frac{1}{\beta}}{\sqrt{k}}\right)$ last-iterate convergence guarantee for SGDM, which does not rely on any boundedness assumption, and only requires minimal knowledge about the problem setting. We also derive a subsequence convergence result at rate $o\left(\frac{1}{\sqrt{k}\log\log k}\right)$. Both of these results improve existing works in the field. 
% this framework, we obtain convergence results of SGDM that improve existing works in the field.

\section{Conclusion}

In this paper, we study the classic problem of stochastic first-order method for smooth convex programs. Despite numerous existing works, several fundamental questions remain unanswered. Among many open problems, one of the most important is: 

\begin{quote}
    \emph{Can we derive an anytime convergence rate without losing any factors in $k$, compared to the state-of-the-art convergence rate for a single-iterate guarantee? } 
\end{quote}

This paper gives an affirmative answer to this above question. This result is achieved by a careful analysis of the dynamics of stochastic gradient with momentum (SGDM). In particular, we establish a continuous-discrete time correspondence for SGDMs by a rigorous stochastic analysis.
% , closing up a gap in the existing heuristic derivation \citep{shi2021hyperparameters}. 

This work paves the way for exploring possibilities in anytime analysis across various settings. For instance, it would be intriguing to investigate the potential establishment of a similar anytime convergence rate for non-convex programs.

% This work opens up a line of possibilities for anytime analysis in several other settings. For example, it would be interesting to consider whether a similar anytime convergence rate be established for non-convex programs. 

% In this paper, we investigate the continuous-discrete relationship in stochastic optimization field. We prove that the continuous-time limit of SGDM is a second-order ODE. We further develop the convergence results for this ODE through a continuous Lyapunov function, and then translate the whole argument to the discrete case. By taking advantage of
% % the discrete Lyapunov argument, we derive an $\widetilde{\mathcal{O}}\left(\frac{\log \frac{1}{\beta}}{\sqrt{k}}\right)$ last-iterate convergence guarantee for SGDM, which does not rely on any boundedness assumption, and only requires minimal knowledge about the problem setting. We also derive a subsequence convergence result at rate $o\left(\frac{1}{\sqrt{k}\log\log k}\right)$. Both of these results improve existing works in the field. 
% this framework, we obtain convergence results of SGDM that improve existing works in the field.

% \input{./tex/Gradient-Decaying-Parameter}

% \bibliographystyle{apalike} 

\bibliography{last-iterate}

% \newpage
\appendix

\section{The continuous-time limit of the AC-SA algorithm \citep{lan2020first}}\label{app:acsa}
In this section, we give an informal derivation of the continuous-time limit of the well-known AC-SA algorithm, and show that it is also of the form (\ref{eq:SGDM-ODE-intro}).

For smooth convex objective function $f$ and Euclidean distance, the AC-SA algorithm \citep{lan2020first} takes the form
\begin{align*}
    &y_k=(1-\alpha_k)x_{k-1}+\alpha_kz_{k-1},\\
    &z_k=z_{k-1}-\gamma_kg(y_k,\xi_k),\\
    &x_k=(1-\alpha_k)x_{k-1}+\alpha_kz_k,
\end{align*}
where the parameters are given as $\alpha_k=\frac{2}{k + 1}$ and $\frac{1}{\gamma_k}=\frac{2L}{k}+\gamma\sqrt{k}$ \citep[Proposition 4.5]{lan2020first}. For sufficiently large $k$, the latter is the dominant term, so we approximately set $\gamma_k=\frac{1}{\gamma\sqrt{k}}$. Furthermore, we take $\eta = \frac{1}{\gamma^2}$ and introduce the continuous-discrete connection between $X(k\eta)$ and $x_k$, which plays the same role as the \emph{ansatz} $x_k\approx X(k\sqrt{s})$ in \citet{su2014differential}. As 
in the Theorem \ref{thm:sgdmtoode}, we let $g(y_k,\xi_k)=\nabla f(y_k)+\xi_k$ for each $k$, where $\xi_k\overset{iid}{\sim}\mathcal{N}(0,I_n)$. Substituting the above parameters, the AC-SA scheme becomes
\begin{align}
    &y_k=\frac{k-1}{k+1}x_{k-1}+\frac{2}{k+1}z_{k-1},\label{acsa-eq1}\\
    &z_k=z_{k-1}-\frac{\sqrt{\eta}}{\sqrt{k}}\left(\nabla f(y_k)+\xi_k\right),\label{acsa-eq2}\\
    &x_k=\frac{k-1}{k+1}x_{k-1}+\frac{2}{k+1}z_k.\label{acsa-eq3}
\end{align}
Rewriting (\ref{acsa-eq3}) as
\begin{align}\label{acsa-eq3-m}
    z_k=\frac{k+1}{2}x_k-\frac{k-1}{2}x_{k-1}
\end{align}
and substituting (\ref{acsa-eq1}) and (\ref{acsa-eq3-m}) to (\ref{acsa-eq2}) yields that
\begin{align*}
    &\frac{k+1}{2}x_k-\frac{k-1}{2}x_{k-1}\\
    =&\frac{k}{2}x_{k-1}-\frac{k-2}{2}x_{k-2}-\frac{\sqrt{\eta}}{\sqrt{k}}\left(\nabla f\left(\frac{k}{k+1}x_k+\frac{k-1}{k+1}x_{k-1}-\frac{k-2}{k+1}x_{k-2}\right)+\xi_k\right),
\end{align*}
and thus 
\begin{align*}
    x_k-x_{k-1} = \frac{k-2}{k+1}(x_{k-1}-x_{k-2})-\frac{2\sqrt{\eta}}{(k+1)\sqrt{k}}\left(\nabla f\left(x_k-\frac{1}{k+1}(x_k-x_{k-1})+\frac{k-2}{k+1}(x_{k-1}-x_{k-2})\right)+\xi_k\right).
\end{align*}
We let $v_k=\frac{x_{k}-x_{k-1}}{\eta}$, then the above equality gives
\begin{align}\label{acsa-eq-v}
    v_k = \frac{k-2}{k+1}v_{k-1}-\frac{2}{(k+1)\sqrt{k\eta}}\left(\nabla f\left(x_k-\frac{\eta}{k+1}v_k+\frac{(k-2)\eta}{k+1}v_{k-1}\right)+\xi_k\right).
\end{align}
Taylor expansion of $\nabla f$ yields that
\begin{align*}
    \nabla f\left(x_k-\frac{\eta}{k+1}v_k+\frac{(k-2)\eta}{k+1}v_{k-1}\right)=\nabla f(x_k)+\nabla^2f(\theta_k)\left(-\frac{1}{k+1}v_k+\frac{k-2}{k+1}v_{k-1}\right)\eta,
\end{align*}
where $\theta_k = \theta_k\left(x_k, x_k-\frac{\eta}{k+1}v_k+\frac{(k-2)\eta}{k+1}v_{k-1}\right)$. By reorganizing (\ref{acsa-eq-v}) and substituting the above Taylor expansion, the AC-SA scheme becomes
\begin{align}\label{eq:ACSAtoODE}
    \left\{\begin{aligned}
        x_k-x_{k-1}=&\eta v_{k},\\
        v_k-v_{k-1}=&-\frac{3}{(k-2)\eta}v_k\eta-\frac{2}{(k-2)\eta\sqrt{k\eta}}\nabla f(x_k)\eta-\frac{2}{(k-2)\eta\sqrt{k\eta}}(\sqrt{\eta}\xi_k)\sqrt{\eta}\\
        &-\frac{2}{(k-2)\eta\sqrt{k\eta}}\nabla^2f(\theta_k)\left(\frac{(k-3)\eta}{(k+1)\eta}v_k-\frac{(k-2)\eta}{(k+1)\eta}(v_k - v_{k-1})\right)\eta^2.
    \end{aligned}\right.
\end{align}

Let $W$ be a standard Brownian motion. As in Section \ref{sec:derivation-ode}, we let $\eta$ in (\ref{eq:ACSAtoODE}) go to zero and replace $x_k$, $v_k$, $\theta_k$, $x_k-x_{k-1}$, $v_k-v_{k-1}$, $k\eta$, $\eta$ and $\sqrt{\eta}\xi$ by $X$, $V$, $\Theta$, $\df X$, $\df V$, $t$, $\df t$ and $dW$, respectively. The discrete scheme (\ref{eq:ACSAtoODE}) becomes
\begin{align*}
    \left\{\begin{aligned}
        &\df X=V\df t,\\
        &\df V=-\frac{3}{t}V\df t-\frac{2}{{t}^\frac{3}{2}}\nabla f(X)\df t-\frac{2}{t^{\frac{3}{2}}}\sqrt{\df t}\df W-\frac{2}{t^\frac{3}{2}}\nabla^2 f(\Theta)\left(V-\df V\right)(\df t)^2.
    \end{aligned}\right.
\end{align*}
Then we omit the high order terms and obtain
\begin{align*}
    \left\{\begin{aligned}
        &\df X=V\df t,\\
        &\df V=-\frac{3}{t}V\df t-\frac{2}{{t}^\frac{3}{2}}\nabla f(X)\df t.
    \end{aligned}\right.
\end{align*}
Consequently, substituting the first line to the second line yields that
\begin{align*}
    \ddot{X}(t)+\frac{3}{t}\dot{X}(t)+\frac{2}{t^{3/2}}\nabla f(X(t))=0,
\end{align*}
which is also of the form (\ref{eq:SGDM-ODE-intro}).

\section{Additional related works}
\label{sec:additional}

% \begin{wrapfigure}{r}{0.5\textwidth}
%     \centering
%     \input{tex/fig}
%     \caption{The framework of our theoretical analysis.}
%     \label{fig:structure}
% \end{wrapfigure}

Stochastic optimization has a long history, with its origins dating back to at least \citep{robbins1951stochastic,kiefer1952stochastic}. In recent years, the field has experienced significant growth, driven by the widespread use of mini-batch training in machine learning \citep{GoodBengCour16}, which induces stochasticity. Researchers have approached the problem of stochastic optimization from various perspectives. For instance, \citet{johnson2013accelerating}, \citet{allen2016variance}, \citet{allen2017katyusha}, \citet{allen2018katyusha}, and \citet{ge2019stabilized} investigated algorithms that only use full-batch training sporadically for finite-sum optimization problems. This class of algorithms are commonly known as Stochastic Variance Reduced Gradient (SVRG). 
\citet{li2019convergence}, \citet{ward2020adagrad}, and \citet{faw2022power} provided theoretical results for stochastic gradient methods with adaptive stepsizes.
\citet{zhang2020adaptive} demonstrated, both theoretically and empirically, that the heavy-tailed nature of gradient noise contributes to the advantage of adaptive gradient methods over SGD.
\citet{gadat2018stochastic, kidambi2018insufficiency, gitman2019understanding} studied the role of momentum in stochastic gradient methods. 
\citet{karimi2016linear} and \citet{madden2020high} proved the convergence of stochastic gradient methods under the PL condition. \citet{sebbouh2021almost} showed the almost sure convergence of SGD and the stochastic heavy-ball method.
The convergence of stochastic gradient methods in expectation is also well studied. \citet{shamir2013stochastic} studied the last-iterate convergence of SGD in expectation for non-smooth objective functions, and provided an optimal averaging scheme. \citet{yan2018unified} presented a unified convergence analysis for stochastic momentum methods. \citet{assran2020convergence} studied the convergence of Nesterov's accelerated gradient method in stochastic settings. \citet{liu2020improved} also provided an improved convergence analysis in expectation of a momentumized SGD, and proved the benefit of using the multistage strategy. 

\section{Auxiliary lemmas}\label{app:proof-thm-hp}

The proof of Theorem \ref{thm:real-anytime} relies on the following two auxiliary lemmas. These results have appeared in \citet{lan2012validation, devolder2011stochastic}, and we include the proofs for completeness.

\begin{lemma}\label{lemma:large-dev-3}
    Let $\theta_1,\cdots,\theta_k$ be a sequence of i.i.d. random variables, $\Gamma_l=\Gamma\left(\theta_{[l]}\right)$ and $\Delta_l=\Delta\left(\theta_{[l]}\right)$ be deterministic functions of $\theta_{[l]}=(\theta_1,\cdots,\theta_l)$, and $c_1,\cdots,c_k$ be a sequence of positive numbers such that:
    \begin{enumerate}
        \item $\E[\Gamma_l|\theta_{[l-1]}]=0$,\label{lem:c1}
        \item $|\Gamma_l|\leq c_l\Delta_l$,\label{lem:c2}
        \item $\E\left[\left.\exp\left(\frac{\Delta_l^2}{\sigma^2}\right)\right|\theta_{[l-1]}\right]\leq\exp(1)$,\label{lem:c3}
    \end{enumerate}
    hold for each $l\leq k$. Then, for any $\lambda\in\R$ and $l\geq1$, we have 
    \begin{align*}
        \E\lbm\left.\exp\lb\frac{\lambda\Gamma_l}{c_l\sigma}\rb\right|\theta_{[l-1]}\rbm\leq\exp\lb\frac{3\lambda^2}{4}\rb.
    \end{align*}
\end{lemma}
\begin{proof}
    Since $\exp(x)\leq x+\exp\lb\frac{9x^2}{16}\rb$ for any $x$, we have
    \begin{align*}
        \E\lbm\left.\exp\lb\frac{\lambda\Gamma_l}{c_l\sigma}\rb\right|\theta_{[l-1]}\rbm\leq& \E\lbm\left.\frac{\lambda\Gamma_l}{c_l\sigma}\right|\theta_{[l-1]}\rbm+\E\lbm\left.\exp\lb\frac{9\lambda^2\Gamma_l^2}{16c_l^2\sigma^2}\rb\right|\theta_{[l-1]}\rbm\\
        \leq& \E\lbm\left.\exp\lb\frac{9\lambda^2\Delta_l^2}{16\sigma^2}\rb\right|\theta_{[l-1]}\rbm,
    \end{align*}
    for any $\lambda$, where the second inequality uses conditions \ref{lem:c1} and \ref{lem:c2}. From the concavity of $f(x)=x^p$ for $0<p\leq1$, we further have
    \begin{align}\label{eq:lem-con1}
        \E\lbm\left.\exp\lb\frac{\lambda\Gamma_l}{c_l\sigma}\rb\right|\theta_{[l-1]}\rbm
        \leq
        \lb\E\lbm\left.\exp\lb\frac{\Delta_l^2}{\sigma^2}\rb\right|\theta_{[l-1]}\rbm\rb^\frac{9\lambda^2}{16}
        \leq
        \exp\lb\frac{9\lambda^2}{16}\rb,
    \end{align}
    for any $0<\lambda\leq\frac{4}{3}$, where the second inequality uses condition \ref{lem:c3}. On the other hand, since $\lambda x\leq\frac{3}{8}\lambda^2+\frac{2}{3}x^2$ for any $\lambda$ and $x$, we have
    \begin{align}
        \begin{split}\label{eq:lem-con2}
            \E\lbm\left.\exp\lb\frac{\lambda\Gamma_l}{c_l\sigma}\rb\right|\theta_{[l-1]}\rbm
            \leq&
            \exp\lb\frac{3\lambda^2}{8}\rb
            \E\lbm\left.\exp\lb\frac{2\Gamma_l^2}{3c_l^2\sigma^2}\rb\right|\theta_{[l-1]}\rbm\\
            \leq&
            \exp\lb\frac{3\lambda^2}{8}\rb
            \E\lbm\left.\exp\lb\frac{2\Delta_l^2}{3\sigma^2}\rb\right|\theta_{[l-1]}\rbm\\
            \leq&\exp\lb\frac{3\lambda^2}{8}+\frac{2}{3}\rb,
        \end{split}
    \end{align}
    for any $\lambda$, where the third inequality follows from the concavity of $f(x)=x^\frac{2}{3}$ and condition \ref{lem:c3}. Combining (\ref{eq:lem-con1}) with (\ref{eq:lem-con2}) yields, for any $\lambda$,
    \begin{align*}
        \E\lbm\left.\exp\lb\frac{\lambda\Gamma_l}{c_l\sigma}\rb\right|\theta_{[l-1]}\rbm\leq\exp\lb\frac{3\lambda^2}{4}\rb,
    \end{align*}
    and thus
    \begin{align*}
        \E\lbm\left.\exp\lb\lambda\Gamma_l\rb\right|\theta_{[l-1]}\rbm\leq\exp\lb\frac{3\lambda^2c_l^2\sigma^2}{4}\rb.
    \end{align*}
\end{proof}

\begin{lemma}\label{lemma:large-dev-1}
    Let $\theta_1,\cdots,\theta_k$ be a sequence of i.i.d. random variables, $\Phi_l=\Phi\left(\theta_{[l]}\right)$ be deterministic functions of $\theta_{[l]}=(\theta_1,\cdots,\theta_l)$, and $c_1,\cdots,c_k$ be a sequence of positive numbers. If the inequality
    \begin{align*}
        \E\left[\left.\exp\left(\frac{\Phi_l^2}{\sigma^2}\right)\right|\theta_{[l-1]}\right]\leq\exp(1)
    \end{align*}
    holds for each $l\leq k$, then for any $\Omega\geq0$, we have
    \begin{align*}
        \Pr\left(\sum_{l=1}^kc_l\Phi_l^2\geq(1+\Omega)\sum_{l=1}^kc_l\sigma^2\right)\leq\exp(-\Omega).
    \end{align*}
\end{lemma}

\begin{proof}
    Since $f(x)=e^x$ is convex, we have
    \begin{align*}
        \E\lbm\exp\lb\frac{\sum_{l=1}^kc_l\Phi_l^2}{\sum_{l=1}^kc_l\sigma^2}\rb\rbm=\E\lbm\exp\lb\frac{\sum_{l=1}^kc_l\sigma^2\frac{\Phi_l^2}{\sigma^2}}{\sum_{l=1}^kc_l\sigma^2}\rb\rbm
        \leq\frac{\sum_{l=1}^kc_l\sigma^2\E\lbm\exp\lb\frac{\Phi_l^2}{\sigma^2}\rb\rbm}{\sum_{l=1}^kc_l\sigma^2}.
    \end{align*}
    For each $l$, we have $\E\lbm\exp\lb\frac{\Phi_l^2}{\sigma^2}\rb\rbm=\E\lbm\E\lbm\left.\exp\lb\frac{\Phi_l^2}{\sigma^2}\rb\right|\theta_{[l-1]}\rbm\rbm\leq\exp(1)$, and thus
    \begin{align*}
        \E\lbm\exp\lb\frac{\sum_{l=1}^kc_l\Phi_l^2}{\sum_{l=1}^kc_l\sigma^2}\rb\rbm\leq\exp(1).
    \end{align*}
    Then Markov inequality gives, for all $\delta>0$,
    \begin{align*}
        \Pr\lb\exp\lb\frac{\sum_{l=1}^kc_l\Phi_l^2}{\sum_{l=1}^kc_l\sigma^2}\rb\geq\delta\rb\leq\frac{\exp(1)}{\delta}.
    \end{align*}
    Therefore, for any $\Omega\geq0$, by letting $\delta=\exp(1+\Omega)$ and using the monotonicity of $f(x)=e^x$, we conclude that
    \begin{equation*}
        \Pr\left(\sum_{l=1}^kc_l\Phi_l^2\geq(1+\Omega)\sum_{l=1}^kc_l\sigma^2\right)\leq\frac{\exp(1)}{\exp(1+\Omega)}=\exp(-\Omega).
    \end{equation*}
    Hence, we arrive at the desired concentration bound.
\end{proof}

\section{Proof of Remark \ref{rmk:better-log}}\label{app:better-log}

Here we show that the convergence rate in Theorem \ref{thm:real-anytime} can be improved to $\mathcal{O}\left(\frac{\log\frac{1}{\beta}\log^\frac{1+\varepsilon}{2}k}{\sqrt{k}}\right)$ by slightly modifying the stepsize.

\begin{theorem}
    Suppose that Assumptions \ref{assump:convex-smooth}, \ref{assump:sto-gradient} and \ref{assump:hp} hold. For any $\beta$, the sequence $\{x_k\}$ generated by SGDM (\ref{eq:SGDM}) with stepsize $\eta_k=\frac{1}{16L^2\log^{(1+\varepsilon)}(k+2)}$, where $0<\varepsilon\leq1$ and initialization $x_0=x_1$ satisfies
    \begin{align*}
        \Pr\left(\text{for any $k\geq0$},\;f(x_k)-f^*\leq\frac{\left(C_1+C_2\log\frac{1}{\beta}\right)\log^\frac{1+\varepsilon}{2}(k+2)}{\sqrt{k+1}}\right)\geq1-2\beta,
    \end{align*}
    where $C_1=L\gamma_2\ene(0)+L\sigma^2(1+\sigma^2\gamma_1\gamma_2)\gamma_1$, $C_2=L\gamma_2+L\sigma^2(1+\sigma^2\gamma_1\gamma_2)\gamma_1$, $\gamma_1=\sum_{k=1}^\infty \frac{16\eta_k}{k}$, and $\gamma_2=\prod_{k=1}^\infty(1+\frac{16\eta_k}{k}\sigma^2)$.
\end{theorem}

\begin{proof}
As in Section \ref{sec:hp}, we let $a_k=\frac{16\eta_k}{k}$, $\gamma_1=\sum_{k=1}^\infty a_k$, and $\gamma_2=\prod_{k=1}^\infty(1+a_k\sigma^2)$. It is straightforward to verify that these two constants are finite for $a_k=\frac{16\eta_k}{k}=\frac{1}{L^2k\log^{(1+\varepsilon)}(k+2)}$.

    The same argument as the proof of Theorem \ref{thm:real-anytime} yields that
    \begin{align*}
        \Pr\left(\sup_{k\geq 0}\ene(k)\geq \gamma_2\left(\ene(0)+\log\frac{1}{\beta}\right)+\left(1+\log\frac{1}{\beta}\right)\sigma^2(1+\sigma^2\gamma_1\gamma_2)\gamma_1\right)\leq2\beta.
    \end{align*}
    Since $4\sqrt{(k+1)\eta_k}(f(x_k)-f^*)\leq\ene(k)$, we conclude that
    \begin{align*}
        \Pr\left(\forall k\geq0,\;f(x_k)-f^*\leq\frac{\left(C_1+C_2\log\frac{1}{\beta}\right)\log^\frac{1+\varepsilon}{2}(k+2)}{\sqrt{k+1}}\right)\geq1-2\beta,
    \end{align*}
    where $C_1=L\gamma_2\ene(0)+L\sigma^2(1+\sigma^2\gamma_1\gamma_2)\gamma_1$, and $C_2=L\gamma_2+L\sigma^2(1+\sigma^2\gamma_1\gamma_2)\gamma_1$.
\end{proof}

% \newpage

\section{Additional Details and Results for the Experiments} \label{app:exp}

In our artificial experiment, the objective function  is $f(x)=\frac{1}{2}x^\tr Ax$, and $A\in\R^{10\times10}$ is randomly generated. When the algorithm calls the gradient oracle at $x\in\R^{10}$, it outputs a stochastic gradient $Ax+\xi$, where $\xi\sim\mathcal{N}(0, 100\cdot I_{10})$. 

For the logistic regression task, the objective function is
\begin{align*}
    f(\beta)=\frac{1}{N}\sum_{i=1}^N\left(y_i\log\frac{1}{1+\exp(-x_i^\tr\beta)}+(1-y_i)\log\left(1-\frac{1}{1+\exp(-x_i^\tr\beta)}\right)\right),
\end{align*}
and we use \texttt{australian} dataset from LIBSVM library \citep{CC01a}.

In the two image classification tasks, we set $\eta=0.2$ for SGDM. The stepsize of SGD is tuned over $[10^{-4},1]$ by grid search \citep{bergstra2012random}, and we report the results with the best choices, $0.005$ for the MNIST task and $0.01$ for the CIFAR-10 task. In the MNIST task, the model is trained for $40$ epochs, while in the CIFAR-10 task, the model is trained for $400$ epochs.

In Figures \ref{fig:individual-ae}, \ref{fig:individual-logreg}, \ref{fig:individual-mnist} and \ref{fig:individual-cifar} we present another three individual runs of SGDM and SGD for the four experiments. All these results confirm that SGDM has faster convergence and smaller oscillations than SGD.

% \newpage

\begin{figure*}[ht!]
    \centering
	\subfigure{
        \includegraphics[width=4.5cm]{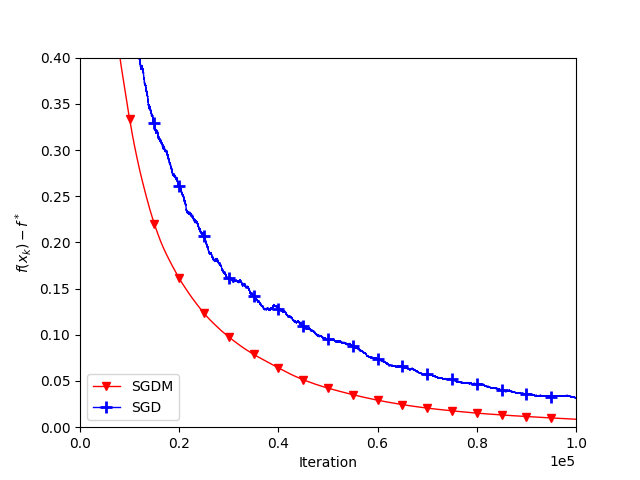}
	}
	\subfigure{
		\includegraphics[width=4.5cm]{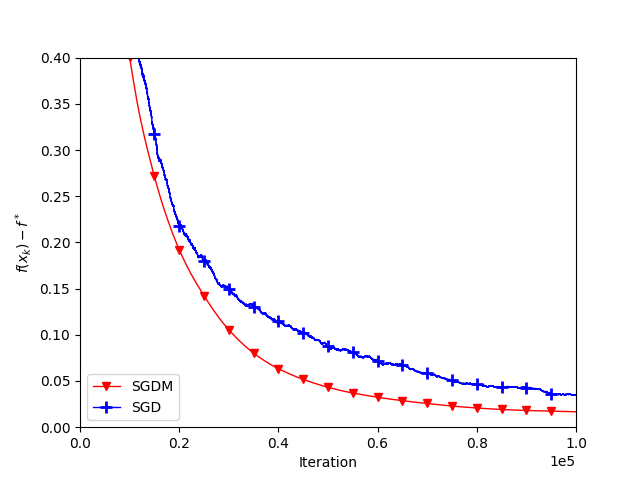}
	}
	\subfigure{
		\includegraphics[width=4.5cm]{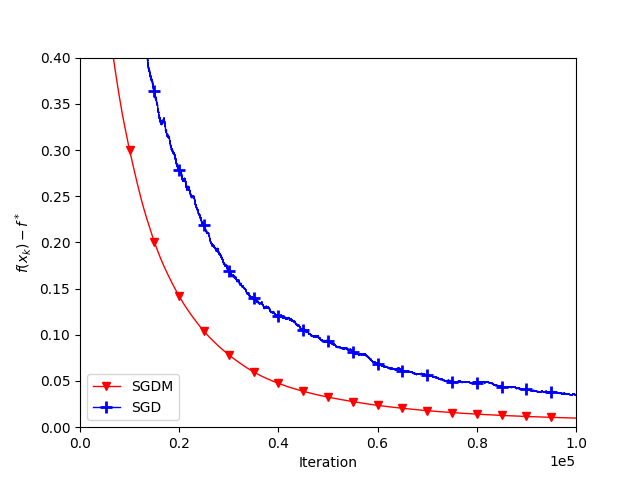}
	}
    \caption{Three individual runs of SGDM and SGD on the artificial experiment.}
    \label{fig:individual-ae}
\end{figure*} 
\begin{figure*}[ht]
    \centering
	\subfigure{
        \includegraphics[width=4.5cm]{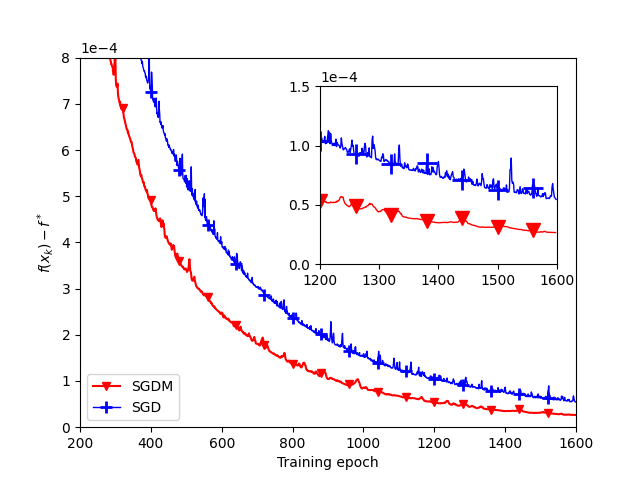}
	}
	\subfigure{
		\includegraphics[width=4.5cm]{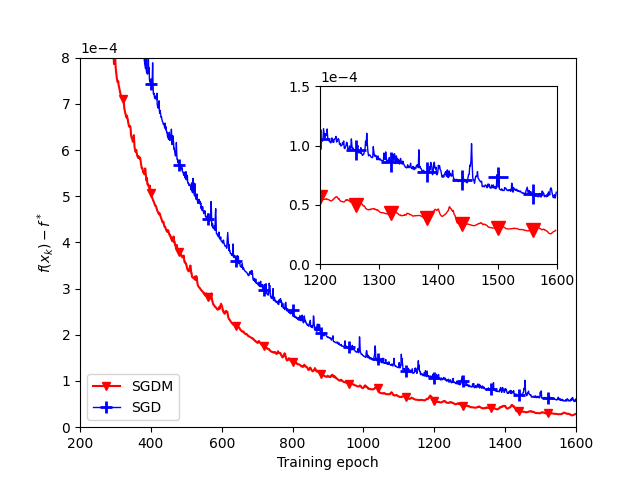}
	}
	\subfigure{
		\includegraphics[width=4.5cm]{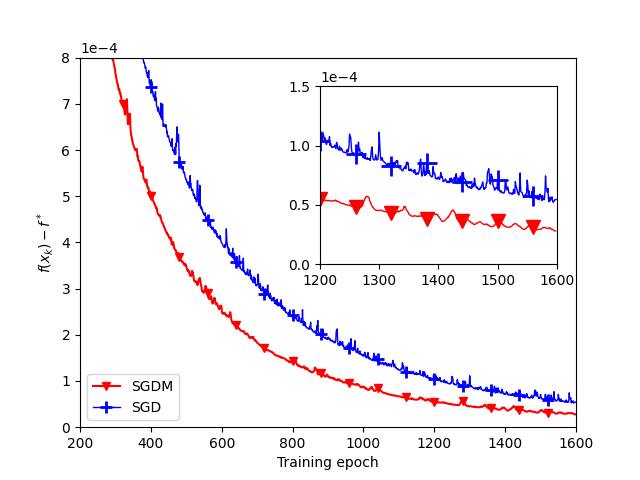}
	}
    \caption{Three individual runs of SGDM and SGD on the logistic regression task. The subplots are zoomed in to provide better views of the last $1/4$ training processes.}
    \label{fig:individual-logreg}
\end{figure*} 
\begin{figure*}[ht]
    \centering
	\subfigure{
        \includegraphics[width=4.5cm]{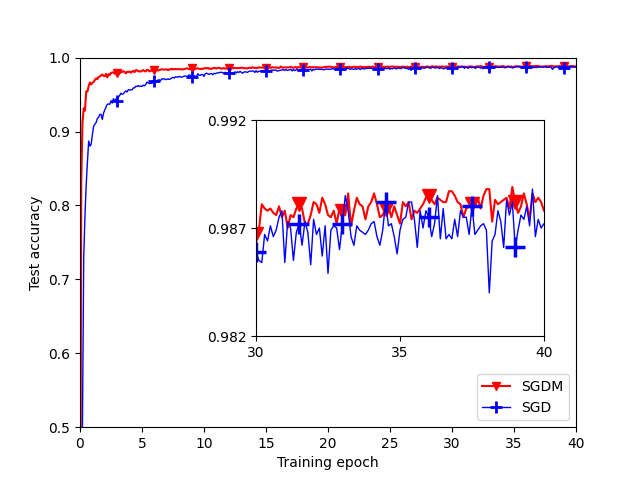}
	}
	\subfigure{
		\includegraphics[width=4.5cm]{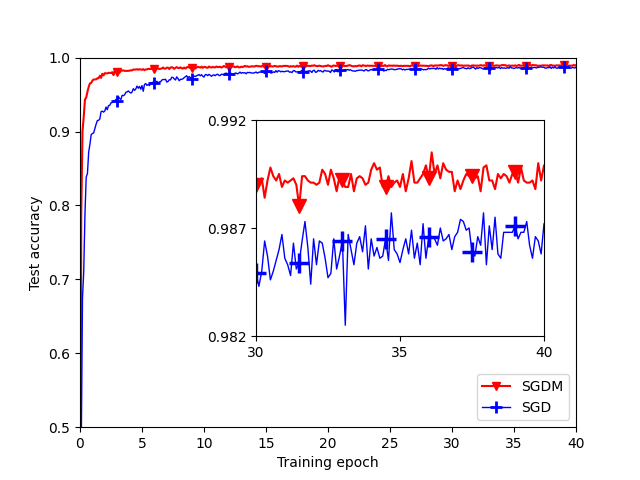}
	}
	\subfigure{
		\includegraphics[width=4.5cm]{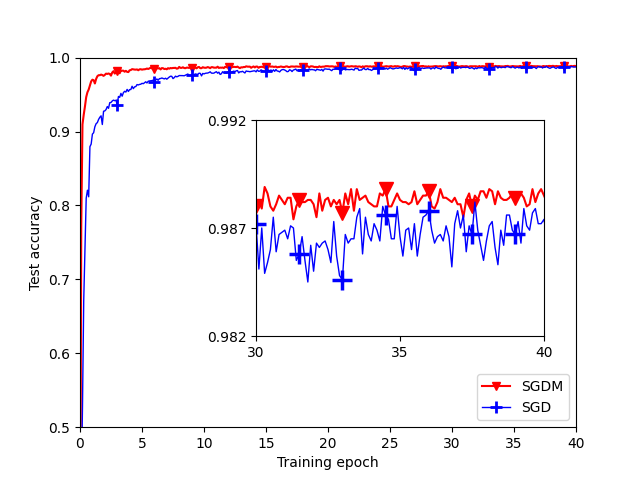}
	}
    \caption{Three individual runs of SGDM and SGD on the MNIST task. The subplots are zoomed in to provide better views of the last $1/4$ training processes.}
    \label{fig:individual-mnist}
\end{figure*} 
\begin{figure*}[ht]
    \centering
	\subfigure{
        \includegraphics[width=4.5cm]{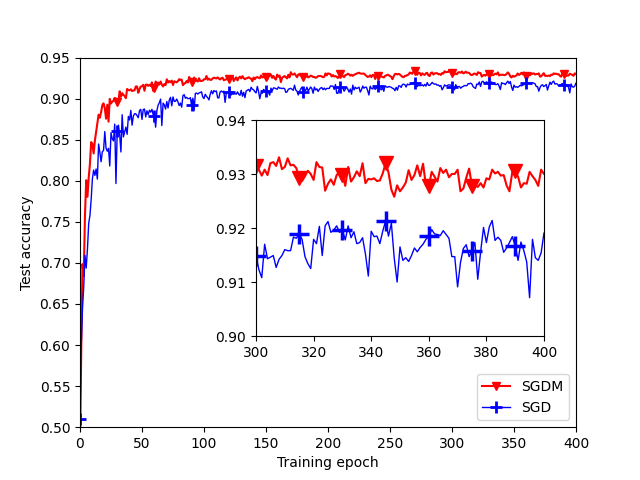}
	}
	\subfigure{
		\includegraphics[width=4.5cm]{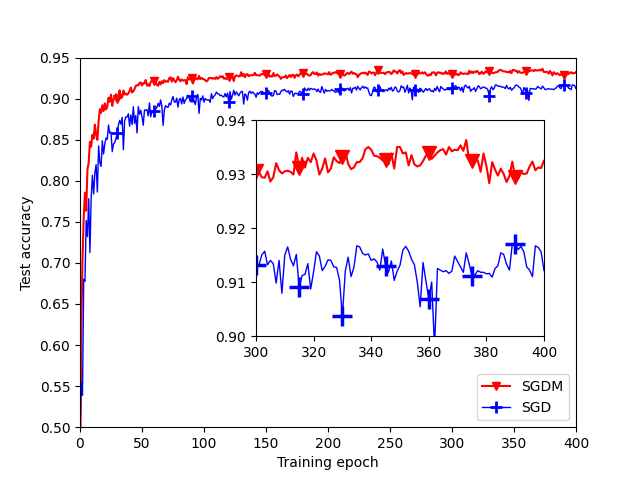}
	}
	\subfigure{
		\includegraphics[width=4.5cm]{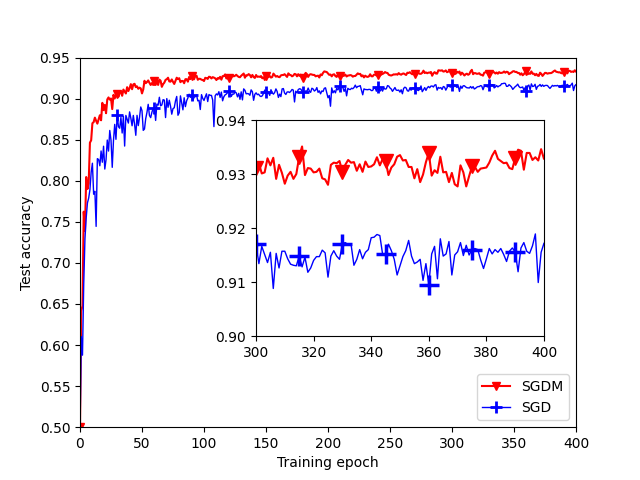}
	}
    \caption{Three individual runs of SGDM and SGD on the CIFAR-10 task. The subplots are zoomed in to provide better views of the last $1/4$ training processes.}
    \label{fig:individual-cifar}
\end{figure*}

\end{document}